

\documentclass[reqno]{amsart}


\usepackage{amsmath}
\usepackage{amssymb}
\usepackage{amsfonts}
\usepackage{amsthm}

\usepackage[T1]{fontenc}





\usepackage[mathcal]{euscript}

\usepackage{algorithmic,algorithm}
\usepackage{algorithm2e_extension}
\SetKwBlock{Repeat}{Repeat}{}

\usepackage{subfigure}
\usepackage{tikz}
\usepackage{xcolor}
\definecolor{halfgray}{gray}{0.55}
\definecolor{webbrown}{rgb}{.6,0,0}
\definecolor{OliveGreen}{rgb}{0,.3,0.0}
\definecolor{BrightViolet}{rgb}{0.5,0.2,0.8}
\definecolor{Maroon}{cmyk}{0, 0.87, 0.68, 0.32}
\definecolor{RoyalBlue}{cmyk}{1, 0.50, 0, .4}
\definecolor{Black}{cmyk}{0, 0, 0, 0}

\usepackage{acronym}
\usepackage{bbold}
\usepackage{cancel}
\usepackage{latexsym}
\usepackage{paralist}
\usepackage{tikz-cd}
\usepackage{wasysym}
\usepackage{xspace}

\usepackage[numbers]{natbib}

\newcommand{\citeor}[1]{\citeauthor{#1} \cite{#1}}

\usepackage{hyperref}
\hypersetup{
colorlinks=true,
linktocpage=true,
pdfstartview=FitH,
breaklinks=true,
pdfpagemode=UseNone,
pageanchor=true,
pdfpagemode=UseOutlines,
plainpages=false,
bookmarks=true,
bookmarksnumbered,
bookmarksopen=false,
bookmarksopenlevel=1,
hypertexnames=true,
pdfhighlight=/O,
urlcolor=RoyalBlue, linkcolor=RoyalBlue, citecolor=OliveGreen, 
pdftitle={},
pdfauthor={},
pdfsubject={},
pdfkeywords={},
pdfcreator={pdfLaTeX},
pdfproducer={LaTeX with hyperref}
}




\newcommand{\R}{\mathbb{R}}

\newcommand{\N}{\mathbb{N}}

\DeclareMathOperator{\bd}{bd}

\DeclareMathOperator{\card}{card}
\DeclareMathOperator{\cl}{cl}

\DeclareMathOperator{\ex}{\mathbb{E}}

\DeclareMathOperator{\one}{\mathbb{1}}

\DeclareMathOperator{\prob}{\mathbb{P}}

\DeclareMathOperator{\supp}{supp}

\DeclareMathOperator{\vol}{Vol}

\DeclareMathOperator*{\union}{\bigcup}

\newcommand{\dd}{\:d}
\newcommand{\ddt}{\frac{d}{dt}}

\newcommand{\eps}{\varepsilon}
\newcommand{\exclude}[1]{\operatorname\setminus\left\{#1\right\}}
\newcommand{\from}{\colon}

\newcommand{\pd}{\partial}
\newcommand{\simplex}{\Delta}

\newcommand{\defeq}{\equiv}


\newcommand{\txs}{\textstyle}

\newcommand{\insum}{\sum\nolimits}

\newcommand{\set}{\mathcal{S}}
\newcommand{\play}{\mathcal{N}}
\newcommand{\act}{\mathcal{A}}
\newcommand{\strat}{\mathcal{X}}
\newcommand{\pay}{u}

\newcommand{\game}{\mathfrak{G}}

\usepackage[textwidth=20mm,disable]{todonotes}
\setlength{\marginparwidth}{2cm}

\newcommand{\negspace}{\!\!\!}

\DeclareMathOperator{\argmax}{\arg\max}

\DeclareMathOperator{\Div}{div}

\DeclareMathOperator{\relint}{rel\,int}

\newcommand{\choice}{Q}

\newcommand{\filter}{\mathcal{F}}
\newcommand{\gibbs}{G}
\newcommand{\rlvl}{\rho}

\newcommand{\temp}{T}

\newcommand{\ctemp}{\eta}
\newcommand{\step}{\gamma}

\newcommand{\abs}[1]{\left\lvert #1 \right\rvert}
\newcommand{\norm}[1]{\left\| #1 \right\|}

\newcommand{\given}{\;\middle\vert\;}




\newcommand{\tagarray}{%
\mbox{}\refstepcounter{equation}%
(\theequation)%
}


\theoremstyle{plain}
\newtheorem{theorem}{Theorem}
\newtheorem{corollary}[theorem]{Corollary}
\newtheorem*{corollary*}{Corollary}
\newtheorem{lemma}[theorem]{Lemma}
\newtheorem{proposition}[theorem]{Proposition}

\theoremstyle{definition}
\newtheorem{definition}[theorem]{Definition}
\newtheorem*{definition*}{Definition}

\theoremstyle{remark}
\newtheorem{remark}{Remark}
\newtheorem*{remark*}{Remark}
\newtheorem{example}{Example}

\numberwithin{equation}{section}
\numberwithin{theorem}{section}


\begin{document}



\title
[Penalty-regulated dynamics and robust learning in games]
{Penalty-regulated dynamics\\and robust learning procedures in games}

\author[P.~Coucheney]{Pierre Coucheney}
\address{Univ. of Versailles, PRISM, F-78035 Versailles, France}
\email{\href{mailto:pierre.coucheney@uvsq.fr}{pierre.coucheney@uvsq.fr}}
\urladdr{\url{http://www.prism.uvsq.fr/users/pico/index.html}}

\author[B.~Gaujal]{Bruno Gaujal}
\address{Inria\\
Univ. Grenoble Alpes, LIG, F-38000 Grenoble, France}
\email{\href{mailto:bruno.gaujal@inria.fr}{bruno.gaujal@inria.fr}}
\urladdr{\url{http://mescal.imag.fr/membres/bruno.gaujal}}

\author[P.~Mertikopoulos]{Panayotis Mertikopoulos}
\address{CNRS (French National Center for Scientific Research), LIG, F-38000 Grenoble, France\\
Univ. Grenoble Alpes, LIG, F-38000 Grenoble, France}
\email{\href{mailto:panayotis.mertikopoulos@imag.fr}{panayotis.mertikopoulos@imag.fr}}
\urladdr{\url{http://mescal.imag.fr/membres/panayotis.mertikopoulos}}

\begin{abstract}
Starting from a heuristic learning scheme for $N$-per\-son games, we derive a new class of continuous-time learning dynamics consisting of a replicator-like drift adjusted by a penalty term that renders the boundary of the game's strategy space repelling.
These penalty-regulated dynamics are equivalent to players keeping an exponentially discounted aggregate of their on-going payoffs and then using a smooth best response to pick an action based on these performance scores.
Owing to this inherent duality, the proposed dynamics satisfy a variant of the folk theorem of evolutionary game theory and they converge to (arbitrarily precise) approximations of Nash equilibria in potential games.
Motivated by applications to traffic engineering, we exploit this duality further to design a discrete-time, payoff-based learning algorithm which retains these convergence properties and only requires players to observe their in-game payoffs:
moreover, the algorithm remains robust in the presence of stochastic perturbations and observation errors, and it does not require any synchronization between players.
\end{abstract}

\thanks{The authors are greatly indebted to the associate editor and two anonymous referees for their insightful suggestions, and to M.~Bravo and R.~Cominetti for many helpful discussions and remarks.}
\thanks{This work was supported by the European Commission in the framework of the FP7 Network of Excellence in Wireless COMmunications NEWCOM\# (contract no. 318306) and the French National Research Agency (ANR) project NETLEARN (grant no. ANR-13-INFR-004).}

\maketitle

\newacro{KKT}{Karush\textendash Kuhn\textendash Tucker}
\newacro{NE}{Nash equilibrium}
\newacroplural{NE}[NE]{Nash equilibria}
\newacro{QRE}{quantal response equilibrium}
\newacroplural{QRE}[QRE]{quantal response equilibria}
\newacro{ODE}{ordinary differential equation}
\newacro{SA}{stochastic approximation}



\section{Introduction}
\label{sec.introduction}

Owing to the computational complexity of Nash equilibria and related game-theoretic solution concepts, algorithms and processes for learning in games have received considerable attention over the last two decades.
Such procedures can be divided into two broad categories, depending on whether they evolve in continuous or discrete time:
the former class includes the numerous dynamics for learning and evolution (see e.g. \citeor{Sandholm10} for a recent survey), whereas the latter focuses on learning algorithms (such as fictitious play and its variants) for infinitely iterated games
\textendash\ for an overview, see \citeor{FL98} and references therein.

A key challenge in these endeavors is that it is often unreasonable to assume that players can monitor the strategies of their opponents \textendash\ or even calculate the payoffs of actions that they did not play.
As a result, much of the literature on learning in games focuses on payoff-based schemes that only require players to observe the stream of their \emph{in-game} payoffs:
for instance, the regret-matching procedure of \citeauthor{HMC00} \cite{HMC00,HMC01} converges to the set of correlated equilibria (in an empirical, time-average sense), whereas the trial-and-error process of \citeor{You09} guarantees with high probability that players will spend a large proportion of their time near a pure Nash equilibrium (provided that such an equilibrium exists).

In this paper, we focus on a reinforcement learning framework in which players score their actions over time based on their observed payoffs and they then employ a smooth best response map (such as logit choice) to determine their actions at the next instance of play.
Learning mechanisms of this kind have been investigated in continuous time by \citeor{BS97}, \citeor{Rustichini99}, \citeor{Hop02}, \citeor{HP04}, \citeor{THV06} and many others:
\citeor{Hop02} in particular showed that in $2$-player games, the continuous-time dynamics that correspond to this learning process may be seen as a variant of the replicator dynamics with an extra penaly term that keeps players from attaining the boundary of the game's strategy space (see also \citeor{HP04}).
On the other hand, from a discrete-time viewpoint, \citeor{LC05} used a $Q$-learning approach to establish the convergence of the resulting learning algorithm in $2$-player games under minimal information assumptions;
in a similar vein, \citeor{CMS10} and \citeor{Bravo11} took a moving-average approach for scoring actions in general $N$-player games and provided sufficient convergence conditions for the resulting dynamics.
Interestingly, in all these cases, when the learning process converges, it converges to a so-called \ac{QRE} which is a fixed point of a \emph{perturbed} best response correspondence \textendash\ as opposed to the standard notion of Nash equilibrium which is a fixed point of the \emph{unperturbed} best response map; see e.g. \citeor{McKP95}.

Discrete-time processes of this kind are usually analyzed by means of \ac{SA} techniques that are used to compare the long-term behavior of the discrete-time process to the corresponding mean-field dynamics in continuous time
\textendash\ for a comprehensive introduction to the subject, see e.g. \citeor{Benaim99} and \citeor{Borkar08}.
Indeed, there are several conditions which guarantee that a discrete-time process and its continuous counterpart both converge to the same sets, so continuous dynamics are usually derived as the limit of (possibly random) discrete-time processes \textendash\ cf. the aforementioned works by \citeor{LC05}, \citeor{CMS10} and \citeor{Bravo11}.

Contrary to this approach, we descend from the continuous to the discrete and we develop two different learning processes from the same dynamical system (the actual algorithm depends crucially on whether we look at the evolution of the players' strategies or the performance scores of their actions).
Accordingly, the first contribution of our paper is to derive a class of \emph{penalty-regulated} game dynamics consisting of a replicator-like drift plus a penalty term that keeps players from approaching the boundary of the state space.
These dynamics are equivalent to players scoring their actions by comparing their exponentially discounted cumulative payoffs over time and then using a smooth best response to pick an action;
as such, the class of penalty-regulated dynamics that we consider constitutes the strategy-space counterpart of the $Q$-learning dynamics of \citeor{LC05}, \citeor{Hop02} and \citeor{THV06}.
Thanks to this link to the replicator dynamics, the dynamics converge to \aclp{QRE} in potential games, and we also establish a variant of the folk theorem of evolutionary game theory (\citeor{HS98}).
In particular, we show the dynamics' stability and convergence depends crucially on the discount factor used by the players to score their strategies over time:
in the undiscounted case, strict Nash equilibria are the only attracting states, just as in the replicator equation;
on the other hand, for positive discount factors, only \acp{QRE} that are close to strict equilibria remain asymptotically stable.

The second contribution of our paper concerns the implementation of these dynamics as a learning algorithm with the following desirable properties:
\smallskip
\begin{enumerate}
\addtolength{\itemsep}{2pt}

\item
The learning process is \emph{distributed} and \emph{stateless}:
players update their strategies using only their observed in-game payoffs and no further knowledge.

\item
The algorithm retains its convergence properties even if the players' observations are subject to stochastic perturbations and observation errors (or even if they are not up-to-date).

\item
Updates need not be synchronized \textendash\ there is no need for a global timer used by all players.

\end{enumerate}
\smallskip
These desiderata are key for the design of robust, decentralized optimization protocols in network and traffic engineering, but they also pose significant challenges.
Nonetheless, by combining the long-term properties of the continuous-time dynamics with \acl{SA} techniques, we show that players converge to arbitrarily precise approximations of strict Nash equilibria whenever the game admits a potential function (cf. Theorem \ref{thm.algo.convergence} and Proposition \ref{prop.robust.delay}).
Thus, thanks to the congestion characterization of such games (\citeor{MS96}), we obtain a distributed robust optimization method for a wide class of engineering problems, ranging from traffic routing to wireless communications \textendash\ see e.g. \citeor{AltmanSurvey}, \citeor{MBM12} and references therein.

\subsection{Paper outline and structure}
\label{sec.outline}

After a few preliminaries, our analysis proper begins in Section \ref{sec.dynamics} where we introduce our cumulative reinforcement learning scheme and derive the associated penalty-regulated dynamics.
Owing to the duality between the players' mixed strategies and the performance scores of their actions (measured by an exponentially discounted aggregate of past payoffs), we obtain two equivalent formulations:
the score-based equation \eqref{eq.PRL} and the strategy-based dynamics \eqref{eq.PD}.
In Section \ref{sec.deterministic}, we exploit this interplay to derive the long-term convergence properties of the dynamics;
finally, Section \ref{sec.stochastic} is devoted to the discretization of the dynamics \eqref{eq.PRL} and \eqref{eq.PD} and their implementation as bona fide learning algorithms.

\subsection{Notational conventions}
If $\set=\{s_{\alpha}\}_{\alpha=0}^{n}$ is a finite set, the real space spanned by $\set$ will be denoted by $\R^{\set}$ and its canonical basis by $\{e_{s}\}_{s\in\set}$.
To avoid drowning in a morass of indices, we will make no distinction between $s\in\set$ and the corresponding basis vector $e_{s}$ of $\R^{\set}$, and we will frequently use the index $\alpha$ to refer interchangeably to either $s_{\alpha}$ or $e_{\alpha}$ (writing e.g. $x_{\alpha}$ instead of $x_{s_{\alpha}}$).
Likewise, if $\{\set_{k}\}_{k\in\mathcal{K}}$ is a finite family of finite sets indexed by $k\in\mathcal{K}$, we will use the shorthands $(\alpha_{k};\alpha_{-k})$ for the tuple $(\dotsc,\alpha_{k-1},\alpha_{k},\alpha_{k+1},\dotsc) \in\prod_{k}\set_{k}$ and we will write $\sum_{\alpha}^{k}$ instead of $\sum_{\alpha\in\set_{k}}$.

The set $\simplex(\set)$ of probability measures on $\set$ will be identified with the unit $n$-dimensional simplex $\simplex(\set) \defeq \{x\in \R^{\set}: \sum_{\alpha} x_{\alpha} =1 \text{ and }x_{\alpha}\geq 0\}$ of $\R^{\set}$.
Finally, regarding players and their actions, we will follow the original convention of Nash and employ Latin indices ($k,\ell,\dotsc$) for players, while keeping Greek ones ($\alpha,\beta,\dotsc$) for their actions (pure strategies);
also, unless otherwise mentioned, we will use $\alpha,\beta,\dotsc$, for indices that start at $0$, and $\mu,\nu,\dotsc$, for those which start at 1.

\subsection{Definitions from game theory}

A \emph{finite game} $\game \defeq \game(\play,\act,\pay)$ will be a tuple consisting of
\begin{inparaenum}[\itshape a\upshape)]
\item
a finite set of \emph{players} $\play = \{1,\dotsc,N\}$;
\item
a finite set $\act_{k}$ of \emph{actions} (or \emph{pure strategies}) for each player $k\in\play$; and
\item
the players' \emph{payoff functions} $\pay_{k}\from \act\to \R$, where $\act \defeq \prod_{k} \act_{k}$ denotes the game's \emph{action space}, i.e. the set of all \emph{action profiles} $(\alpha_{1},\dotsc,\alpha_{N})$, $\alpha_{k}\in\act_{k}$.
\end{inparaenum}
A \emph{restriction} of $\game$ will then be a game $\game'\defeq\game'(\play,\act',u')$ with the same players as $\game$, each with a subset $\act_{k}'\subseteq\act_{k}$ of their original actions, and with payoff functions $\pay_{k}'\defeq \pay_{k}|_{\act'}$ suitably restricted to the reduced action space $\act' = \prod_{k}\act_{k}'$ of $\game'$.

Of course, players can mix their actions by taking probability distributions $x_{k} = (x_{k\alpha})_{\alpha\in\act_{k}} \in\simplex(\act_{k})$ over their action sets $\act_{k}$.
In that case, their expected payoffs will be
\begin{equation}
\label{eq.payoff}
\pay_{k}(x) = \insum_{\alpha_{1}}^{1}\dotsi \insum_{\alpha_{N}}^{N} \pay_{k} (\alpha_{1},\dotsc,\alpha_{N})\,
x_{1,\alpha_{1}} \!\dotsm\, x_{N,\alpha_{N}},
\end{equation}
where $x = (x_{1},\dotsc,x_{N})$ denotes the players' \emph{strategy profile} and $\pay_{k} (\alpha_{1},\dotsc,\alpha_{N})$ is the payoff to player $k$ in the (pure) action profile $(\alpha_{1},\dotsc,\alpha_{N})\in\act$;%
\footnote{Recall that we will be using $\alpha$ for both elements $\alpha\in\act_{k}$ and basis vectors $e_{\alpha}\in\simplex(\act_{k})$, so there is no clash of notation between payoffs to pure and mixed strategies.}
more explicitly, if player $k$ plays the pure strategy $\alpha\in\act_{k}$, we will use the notation
$\pay_{k\alpha}(x)
\defeq \pay_{k}(\alpha;x_{-k})
= \pay_{k}(x_{1},\dotsc, \alpha, \dotsc, x_{N})$.
In this mixed context, the \emph{strategy space} of player $k$ will be the simplex $\strat_{k} \defeq \simplex(\act_{k})$ while the strategy space of the game will be the convex polytope $\strat \defeq \prod_{k}\strat_{k}$.
Together with the players' (expected) payoff functions $\pay_{k}\from\strat\to\R$, the tuple $(\play,\strat,\pay)$ will be called the \emph{mixed extension} of $\game$ and it will also be denoted by $\game$ (relying on context to resolve any ambiguities).

The most prominent solution concept in game theory is that of \ac{NE} which characterizes profiles that are resilient against unilateral deviations;
formally, $q\in\strat$ will be a \emph{\acl{NE}} of $\game$ when
\begin{equation}
\label{eq.Nash}
\tag{NE}
\pay_{k} (x_{k};q_{-k}) \leq \pay_{k}(q)\quad
\text{for all $x_{k}\in\strat_{k}$ and for all $k\in\play$.}
\end{equation}
In particular, if (\ref{eq.Nash}) is strict for all $x_{k}\in\strat_{k}\exclude{q_{k}}$, $k\in\play$, $q$ will be called a \emph{strict} \acl{NE};
finally, a \emph{restricted equilibrium} of $\game$ will be a \acl{NE} of a restriction $\game'$ of $\game$.

An especially relevant class of finite games is obtained when the players' payoff functions satisfy the \emph{potential property}\,:
\begin{equation}
\label{eq.potential}
\pay_{k\alpha}(x) - \pay_{k\beta}(x) = U(\alpha; x_{-k}) - U(\beta;x_{-k})
\quad
\end{equation}
for some (necessarily) multilinear function $U\from\strat\to\R$.
When this is the case, the game will be called a \emph{potential game with potential function $U$}, and as is well known, the pure Nash equilibria of $\game$ will be precisely the vertices of $\strat$ that are local maximizers of $U$ (\citeor{MS96}).

\section{Reinforcement learning and penalty-regulated dynamics}
\label{sec.dynamics}

Our goal in this section will be to derive a class of learning dynamics based on the following reinforcement premise:
agents keep a long-term ``performance score'' for each of their actions and they then use a smooth best response to map these scores to strategies and continue playing.
Accordingly, our analysis will comprise two components:
\begin{enumerate}
\addtolength{\itemsep}{2pt}
\item
The \emph{assessment stage} (Section \ref{sec.assessment}) describes the precise way with which players aggregate past payoff information in order to update their actions' performance scores.

\item
The \emph{choice stage} (Section \ref{sec.choice}) then details how these scores are used to select a mixed strategy.
\end{enumerate}

For simplicity, we will work here in continuous time and we will assume that players can observe (or otherwise calculate) the payoffs of all their actions in a given strategy profile;
the descent from continuous to discrete time and the effect of imperfect information will be explored in Section \ref{sec.stochastic}.

\subsection{The assessment stage: aggregation of past information}
\label{sec.assessment}

The aggregation scheme that we will consider is the familiar exponential discounting model:
\begin{equation}
\label{eq.score.int}
y_{k\alpha}(t)
	= \int_{0}^{t} \lambda^{t-s} \pay_{k\alpha}(x(s)) \dd s,
\end{equation}
where $\lambda\in(0,\infty)$ is the model's discount rate,
$x(s)\in\strat$ is the players' strategy profile at time $s$
and we are assuming for the moment that the model is initially unbiased, i.e. $y(0) = 0$.
Clearly then:
\begin{enumerate}
\addtolength{\itemsep}{2pt}
\item
For $\lambda\in(0,1)$ the model assigns exponentially more weight to more recent observations.

\item
If $\lambda=1$ all past instances are treated uniformly \textendash\ e.g. as in \citeor{Rustichini99}, \citeor{HSV09}, \citeor{Sorin09}, \citeor{MM10} and many others.

\item
For $\lambda>1$, the scheme \eqref{eq.score.int} instead assigns exponentially more weight to older instances.
\end{enumerate}

\smallskip

With this in mind, differentiating \eqref{eq.score.int} readily yields
\begin{equation}
\label{eq.score}
\dot y_{k\alpha}
	= \pay_{k\alpha} - \temp y_{k\alpha},
\end{equation}
where
\begin{equation}
\temp
	\defeq \log(1/\lambda)
\end{equation}
represents the \emph{discount rate} of the performance assessement scheme \eqref{eq.score.int}.
In tune with our previous discussion, the standard exponential discounting regime $\lambda\in(0,1)$ corresponds to positive $\temp>0$, a discount rate of $0$ means that past information is not penalized in favor of more recent observations, while $\temp<0$ means that past observations are reinforced in favor of more recent ones.

\smallskip

\begin{remark}\label{re:LeslieAlgo}
\citeor{LC05} and \citeor{THV06} examined the aggregation scheme \eqref{eq.score} from a quite different viewpoint, namely as the continuous-time limit of the $Q$-learning estimator
\begin{equation}
\label{eq.Qlearning}
y_{k\alpha}(n+1)
	= y_{k\alpha}(n) + \step_{n+1} \left(\pay_{k\alpha}(x(n)) - y_{k\alpha}(n)\right)
	\times \frac{\one(\alpha_{k}(n+1) = \alpha)}{\prob\left(\alpha_{k}(n+1) = \alpha \given \filter_{n} \right)},
\end{equation}
where $\one$ and $\prob$ denote respectively the indicator and probability of player $k$ choosing $\alpha\in\act_{k}$ at time $n+1$ given the history $\filter_{n}$ of the process up to time $n$,
while $\step_{n}$ is a variable step-size with $\sum_{n} \step_{n} = +\infty$ and $\sum_{n} \step_{n}^{2} < +\infty$ (see also \citeor{FL98}).
The exact interplay between \eqref{eq.score} and \eqref{eq.Qlearning} will be explored in detail in Section \ref{sec.stochastic};
for now, we simply note that \eqref{eq.score} can be interpreted both as a model of discounting past information and also as a moving $Q$-average.
\end{remark}

\smallskip

\begin{remark}
We should also note here the relation between \eqref{eq.Qlearning} and the moving average estimator of \citeor{CMS10} that omits the factor $\prob\left(\alpha_{k}(n+1) = \alpha\given\filter_{n}\right)$ (or the similar estimator of \citeor{Bravo11} which has a state-dependent step size).
As a result of this difference, the mean-field dynamics of \citeor{CMS10} are scaled by $x_{k\alpha}$, leading to the adjusted dynamics $\dot y_{k\alpha} = x_{k\alpha} \left(\pay_{k\alpha} - y_{k\alpha}\right)$.
Given this difference in form, there is essentially no overlap between our results and those of \citeor{CMS10}, but we will endeavor to draw analogies with their results wherever possible.
\end{remark}

\subsection{The choice stage: smooth best responses}
\label{sec.choice}

Having established the way that agents evaluate their strategies' performance over time, we now turn to mapping these assessment scores to mixed strategies $x\in\strat$.
To that end, a natural choice would be for each agent to pick the strategy with the highest score via the mapping
\begin{equation}
\label{eq.br}
\txs
y_{k}
	\mapsto \argmax_{x_{k}\in\strat_{k}} \insum_{\beta}^{k} x_{k\beta} y_{k\beta}
\end{equation}
Nevertheless, this ``best response'' approach carries several problems:
First, if two scores $y_{k\alpha}$ and $y_{k\beta}$ happen to be equal (e.g. if there are payoff ties), \eqref{eq.br} becomes a multi-valued mapping which requires a tie-breaking rule to be resolved (and is theoretically quite cumbersome to boot).
Additionally, such a practice could lead to completely discontinuous trajectories of play in continuous time \textendash\
for instance, if the payoffs $\pay_{k\alpha}$ are driven by an additive white Gaussian noise process, as is commonly the case in information-theoretic applications of game theory; see e.g. \citeor{AltmanSurvey}.
Finally, since best responding generically leads to pure strategies, such a process precludes convergence of strategies to non-pure equilibria in finite games.

To circumvent these obstacles, we will replace the $\argmax$ operator with the regularized variant
\begin{equation}
\label{eq.choice}
\txs
\choice_{k}(y_{k})
	=\argmax_{x_{k}\in\strat_{k}} \left\{\insum_{\beta}^{k} x_{k\beta} y_{k\beta} - h_{k}(x_{k})\right\},
\end{equation}
where $h_{k}\from\strat_{k}\to\R$ is a smooth strongly convex function which acts as a \emph{penalty} (or ``control cost'') to the maximization objective $\insum_{\beta}^{k} x_{k\beta} y_{k\beta}$ of player $k$.%
\footnote{Note here that this penalty mechanism is different than the penalty imputed to past payoff observations in the performance assessment step \eqref{eq.score.int}:
\eqref{eq.score.int} discounts past instances of play whereas \eqref{eq.choice} discourages the player from choosing pure strategies.
Despite this fundamental difference, these two processes end up being intertwined in the resulting learning scheme, so we will use the term ``penalty'' for both mechanisms, irrespective of origin.}
Choice models of this type are known in the literature as \emph{smooth best response maps} (or \emph{quantal response functions}) and have seen extensive use in game-theoretic learning;
for a comprehensive account, see e.g. \citeor{vanDamme87}, \citeor{McKP95}, \citeor{FL98}, \citeor{HS02}, \citeor{Sandholm10} and references therein.
Formally, following \citeor{ABB04}, we have:

\begin{definition}
\label{def.choice}
Let $\set$ be a finite set and let $\simplex\equiv\simplex(\set)$ be the unit simplex spanned by $\set$.
We will say that $h\from\simplex\to\R\cup\{+\infty\}$ is a \emph{penalty function} on $\simplex$ if:
\smallskip
\begin{enumerate}
\addtolength{\itemsep}{2pt}
\item
$h$ is finite except possibly on the relative boundary $\bd(\simplex)$ of $\simplex$.
\item
$h$ is continuous on $\simplex$, smooth on $\relint(\simplex)$, and $|dh(x)|\to +\infty$ when $x$ converges to $\bd(\simplex)$.
\item
$h$ is convex on $\simplex$ and strongly convex on $\relint(\simplex)$.
\end{enumerate}
\smallskip
We will also say that $h$ is (\emph{regularly}) \emph{decomposable with kernel $\theta$} if $h(x)$ can be written in the form:
\begin{equation}
\label{eq.decomposable}
h(x)
	= \insum_{\beta\in\set} \theta(x_{\beta})
\end{equation}
where $\theta\from[0,+1]\to\R\cup\{+\infty\}$ is a continuous function such that
\smallskip
\begin{enumerate}
[\itshape a\upshape)]
\addtolength{\itemsep}{2pt}
\item
$\theta$ is finite and smooth on $(0,1]$.
\item
$\theta''(x)>0$ for all $x\in(0,1]$.
\item
$\lim_{x\to0^{+}}\theta'(x) = -\infty$ and $\lim_{x\to0^{+}} \theta'(x)/\theta''(x) = 0$.
\end{enumerate}
\vspace{.5ex}
In this context, the map $\choice\from\R^{\set}\to\simplex$ of \eqref{eq.choice} will be referred to as the \emph{choice map} (or \emph{smooth best response} or \emph{quantal response function}) induced by $h$.
\end{definition}

\smallskip

Given that \eqref{eq.choice} allows us to view $\choice(\ctemp y) = \argmax_{x\in\simplex}\{\sum_{\beta} x_{\beta} y_{\beta} - \ctemp^{-1} h(x)\}$ as a smooth approximation to the $\argmax$ operator in the limit $\ctemp\to\infty$ (i.e. when the penalty term becomes negligible), the choice stage of our learning process will consist precisely of the choice maps that are derived from penalty functions as above;
for simplicity of presentation however, our analysis will mostly focus on the decomposable case.

In any event, Definition \ref{def.choice} will be central to our considerations, so some comments are in order:

\smallskip

\setcounter{remark}{0}

\begin{remark}
The fact that choice maps are well-defined and single-valued is an immediate consequence of the convexity and boundary properties of $h$;
the smoothness of $\choice$ then follows from standard arguments in convex analysis \textendash\ see e.g. Chapter 26 in \citeor{Rockafellar70}.
Moreover, the requirement $\lim_{x\to0^{+}} \theta'(x)/\theta''(x) = 0$ of Definition \ref{def.choice} is just a safety net to ensure that penalty functions do not exhibit pathological traits near the boundary $\bd(\simplex)$ of $\simplex$.
As can be easily seen, this growth condition is satisfied by all of the example functions \eqref{eq.entropy} below;
in fact, to go beyond this natural requirement, $\theta''$ must oscillate deeply and densely near $0$.
\end{remark}

\smallskip

\begin{remark}
\label{rem.entropies}
Examples of penalty functions abound;
some of the most prominent ones are:

\smallskip

\begin{subequations}
\label{eq.entropy}
\noindent
\begin{tabular*}{\textwidth}{@{\extracolsep{\fill}} llllr}
\indent
1.
	&The Gibbs entropy:
	&$h(x) = \insum_{\beta} x_{\beta} \log x_{\beta}$.
	&
	&\tagarray\negspace
	\label{eq.entropy.Gibbs}
	\\[.5ex]
\indent
2.	
	&The Tsallis entropy:
	&$h(x) = (1-q)^{-1}\insum_{\beta} (x_{\beta} - x_{\beta}^{q})$,
	&$0<q\leq1$.%
	&\tagarray\negspace
	\label{eq.entropy.Tsallis}
	\\[.5ex]
\indent
3.
	&The Burg entropy:
	&$h(x) = -\insum_{\beta} \log x_{\beta}$.
	&
	&\tagarray\negspace
	\label{eq.entropy.Burg}
	\\[1ex]
\end{tabular*}
\end{subequations}
Strictly speaking, the Tsallis entropy is not well-defined for $q=1$, but it approaches the standard Gibbs entropy as $q\to 1$, so we will use \eqref{eq.entropy.Gibbs} for $q=1$ in that case.%
\footnote{Actually, entropies are concave in statistical physics and information theory, but this detail will not concern us here.}
\end{remark}

\smallskip

\begin{example}[Logit choice]
The most well-known example of a smooth best response is the so-called \emph{logit map}%
\begin{equation}
\label{eq.Gibbs}
\gibbs_{\alpha} (y)
	= \frac{\exp(y_{\alpha})}{\sum_{\beta} \exp (y_{\beta})},
\end{equation}
which is generated by the Gibbs entropy $h(x) = \insum_{\beta} x_{\beta} \log x_{\beta}$ of \eqref{eq.entropy.Gibbs}.
For uses of this map in game-theoretic learning, see e.g. \citeor{CMS10}, \citeor{FL98}, \citeor{HS02}, \citeor{HSV09}, \citeor{LC05}, \citeor{McFadden74}, \citeor{MCZ00}, \citeor{MM10}, \citeor{Rustichini99}, \citeor{Sorin09} and many others.
\end{example}

\begin{remark}
Interestingly, \citeor{McKP95} provide an alternative derivation of \eqref{eq.Gibbs} as follows:
assume first that the score vector $y$ is subject to additive stochastic fluctuations of the form
\begin{equation}
\label{eq.score.perturbed}
\tilde y_{\alpha} = y_{\alpha} + \xi_{\alpha},
\end{equation}
where the $\xi_{\alpha}$ are independent Gumbel-distributed random variables with zero mean and scale parameter $\eps>0$ (amounting to a variance of $\eps^{2}\pi^{2}/6$).
It is then known that the \emph{choice probability} $P_{\alpha}(y)$ of the $\alpha$-th action (defined as the probability that $\alpha$ maximizes the perturbed variable $\tilde y_{\alpha}$) is just
\begin{equation}
\label{eq.choice.Gibbs}
P_{\alpha}(y)
	\defeq \prob\left(\tilde y_{\alpha} = \max\nolimits_{\beta} \tilde y_{\beta}\right)
	= \gibbs_{\alpha}(\eps^{-1} y).
\end{equation}

As a result, the logit map can be seen as either a smooth best response to the deterministic penalty function $h(x)$ or as a perturbed best response to the stochastic perturbation model \eqref{eq.score.perturbed};
furthermore, both models approximate the ordinary best response correspondence when the relative magnitude of the perturbations approaches $0$.
In a more general context, \citeor{HS02} showed that this observation continues to hold even when the stochastic perturbations $\xi_{\alpha}$ are not Gumbel-distributed but follow an arbitrary probability law with a strictly positive and smooth density function:
mutatis mutandis, the choice probabilities of a stochastic perturbation model of the form \eqref{eq.score.perturbed} can be interpreted as a smooth best response map induced by a deterministic penalty function in the sense of Definition \ref{def.choice}.%
\end{remark}

\subsection{The dynamics of penalty-regulated learning}
\label{sec.dynamics.derivation}

Combining the results of the previous two sections, we will focus on the \emph{penalty-regulated learning process}:
\begin{equation}
\label{eq.PRL}
\tag{PRL}
\begin{aligned}
y_{k\alpha}(t)
	&= y_{k\alpha}(0)\,e^{-\temp t} + \int_{0}^{t} e^{-\temp (t-s)} \pay_{k\alpha}(x(s)) \dd s,
	\\[5pt]
x_{k}(t)
	&= \choice_{k}(y_{k}(t)),
\end{aligned}
\end{equation}
where $y_{k\alpha}(0)$ represents the initial bias of player $k$ towards action $\alpha\in\act_{k}$,%
\footnote{The exponential decay of $y(0)$ is perhaps best explained by the differential formulation \eqref{eq.score} for which $y(0)$ is an initial condition;
in words, the player's initial bias simply dies out at the same rate as a payoff observation at $t=0$.}
$\temp$ is the model's discount rate,
and $\choice_{k}\from\R^{\act_{k}}\to\strat_{k}$ is the smooth best response map of player $k$ (induced in turn by some player-specific penalty function $h_{k}\from\strat_{k}\to\R$).

From an implementation perspective, the difficulty with \eqref{eq.PRL} is twofold:
First, it is not always practical to write the choice maps $\choice_{k}$ in a closed-form expression that the agents can use to update their strategies.%
\footnote{The case of the Gibbs entropy is a shining (but, ultimately, misleading) exception to the norm.}
Furthermore, even when this is possible, \eqref{eq.PRL} is a two-step, primal-dual process which does not allow agents to update their strategies directly.
The rest of this section will thus be devoted to writing \eqref{eq.PRL} as a continuous-time dynamical system on $\strat$ that can be updated with minimal computation overhead.

To that end, we will focus on decomposable penalty functions of the form
\begin{equation}
\label{eq.decomposable-k}
h_{k}(x_{k})
	= \insum_{\beta}^{k} \theta_{k}(x_{k\beta}),
\end{equation}
where the kernels $\theta_{k}$, $k\in\play$, satisfy the convexity and steepness conditions of Definition \ref{def.choice}.%
\footnote{Non-decomposable $h$ can be treated similarly but the end expression is more cumbersome so we will not present it.}
In this context, the \ac{KKT} conditions for the maximization problem \eqref{eq.choice} give
\begin{equation}
\label{eq.choice.KKT}
y_{k\alpha} - \theta_{k}'(x_{k\alpha}) = \zeta_{k},
\end{equation}
where $\zeta_{k}$ is the Lagrange multiplier for the equality constraint $\insum_{\alpha}^{k} x_{k\alpha} = 1$.%
\footnote{The complementary slackness multipliers for the inequality constraints $x_{k\alpha}\geq0$ can be omitted because the steepness properties of $\theta_{k}$ ensure that the solution of \eqref{eq.choice} is attained in the interior of the simplex.}
By differentiating, we then obtain:
\begin{equation}
\dot y_{k\alpha} - \theta_{k}''(x_{k\alpha}) \dot x_{k\alpha}
	= \dot\zeta_{k},
\end{equation}
and hence,
a little algebra yields:
\begin{flalign}
\label{eq.PD0}
\dot x_{k\alpha}
	&= \frac{1}{\theta_{k}''(x_{k\alpha})} \big[\dot y_{k\alpha} - \dot\zeta_{k}\big]
	\notag\\
	&= \frac{1}{\theta_{k}''(x_{k\alpha})} \big[\pay_{k\alpha}(x) - \temp y_{k\alpha} - \dot\zeta_{k}\big]
	\notag\\
	&= \frac{1}{\theta_{k}''(x_{k\alpha})} \big[\pay_{k\alpha}(x) - \temp \theta_{k}'(x_{k\alpha})
	- \big(\dot\zeta_{k} + \temp\zeta_{k}\big)\big],
\end{flalign}
where the second equality follows from the definition of the penalty-regulated scheme \eqref{eq.PRL} and the last one from the \ac{KKT} equation \eqref{eq.choice.KKT}.
However, since $\sum_{\alpha}^{k} x_{k\alpha} = 1$, we must also have $\sum_{\alpha}^{k} \dot x_{k\alpha} = 0$;
thus, summing \eqref{eq.PD0} over $\alpha\in\act_{k}$ gives:
\begin{equation}
\dot \zeta_{k} + \temp \zeta_{k}
	= \Theta_{k}''(x_{k}) \insum_{\beta}^{k} \frac{1}{\theta_{k}''(x_{k\beta})}
	\left[\pay_{k\beta}(x) - \temp\theta_{k}'(x_{k\beta})\right],
\end{equation}
where $\Theta_{k}''$ denotes the harmonic aggregate:%
\footnote{Needless to say, $\Theta_{h}''$ is not a second derivatives per se; we just use this notation for visual consistency.}
\begin{equation}
\Theta_{k}''(x_{k})
	= \left[\insum_{\beta}^{k} 1/\theta_{k}''(x_{k\beta})\right]^{-1}.
\end{equation}
In this way, by putting everything together, we finally obtain the \emph{penalty-regulated dynamics}
\begin{flalign}
\label{eq.PD}
\tag{PD}
\dot x_{k\alpha}
	&= \frac{1}{\theta_{k}''(x_{k\alpha})}
	\left[
	\pay_{k\alpha}(x) - \Theta_{k}''(x_{k}) \insum_{\beta}^{k} \frac{\pay_{k\beta}(x)}{\theta_{k}''(x_{k\beta})}
	\right]
	\notag\\
	&- \frac{\temp}{\theta_{k}''(x_{k\alpha})}
	\left[
	\theta_{k}'(x_{k\alpha}) - \Theta_{k}''(x_{k}) \insum_{\beta}^{k} \frac{\theta_{k}'(x_{k\beta})}{\theta''(x_{k\beta})}
	\right],
\end{flalign}

Along with the aggregation-driven learning scheme \eqref{eq.PRL}, the dynamics \eqref{eq.PD} will be the main focus of our paper, so some remarks and examples are in order:

\smallskip

\begin{example}[The Replicator Dynamics]
As a special case, the Gibbs kernel $\theta(x) = x \log x$ of \eqref{eq.entropy.Gibbs} leads to the \emph{adjusted replicator equation}
\begin{equation}
\label{eq.TRD}
\tag{RD$_{\temp}$}
\dot x_{k\alpha}
	= x_{k\alpha} \left[\pay_{k\alpha}(x) -\insum_{\beta}^{k} x_{k\beta} \pay_{k\beta}(x)\right]
	- \temp\, x_{k\alpha} \left[\log x_{k\alpha} - \insum_{\beta}^{k} x_{k\beta} \log x_{k\beta} \right].
\end{equation}
As the name implies, when the discount rate $\temp$ vanishes, \eqref{eq.TRD} freezes to the ordinary (asymmetric) replicator dynamics of \citeor{TJ78}:
\begin{equation}
\label{eq.RD}
\tag{RD}
\dot x_{k\alpha} = x_{k\alpha} \left[\pay_{k\alpha}(x) - \insum_{\beta}^{k} x_{k\beta} \pay_{k\beta}(x) \right].
\end{equation}
In this way, for $\temp=0$, we recover the well-known equivalence between the replicator dynamics and exponential learning in continuous time \textendash\ for a more detailed treatment, see e.g. \citeor{Rustichini99}, \citeor{HSV09}, \citeor{Sorin09} and \citeor{MM10}.
\end{example}

\smallskip

\begin{remark}[Links with existing dynamics]
\citeor{LC05} derived a differential version of the penalty-regulated learning process \eqref{eq.PRL} as the mean-field dynamics of the $Q$-learning estimator \eqref{eq.Qlearning};
independently, \citeor{THV06} obtained a variant of the strategy-space dynamics \eqref{eq.PD} in the context of $Q$-learning in $2$-player games.
A version of \eqref{eq.PD} for $2$-player games also appeared in \citeor{Hop02} and \citeor{HP04} as a perturbed reinforcement learning model;
other than that however, the penalty-regulated dynamics \eqref{eq.PD} appear to be new.

Interestingly, in terms of structure, the differential system \eqref{eq.PD} consists of a replicator-like term driven by the game's payoffs, plus a game-independent adjustment term which reflects the penalty imputed to past payoffs.
This highlights a certain structural similarity between \eqref{eq.PD} and other classes of game dynamics with comparable correction mechanisms:
for instance, in a stochastic setting, It\^o's lemma leads to a ``second order in space'' correction in the stochastic replicator dynamics of \citeor{FH92}, \citeor{Cabrales00}, and \citeor{MM10};
likewise, such terms also appear in the ``second order in time'' approach of \citeauthor{LM13} \cite{LM13,LM13b}.

The reason for this similarity is that all these models are first defined in terms of a set of auxiliary variables:
absolute population sizes in \citeor{FH92} and \citeor{Cabrales00}, and payoff scores in \citeor{LM13} and here.
Differentiation of these ``dual'' variables with respect to time yields a replicator-like term (which carries the dependence on the game's payoffs) plus a game-independent adjustment which only depends on the relation between these ``dual'' variables and the players' mixed strategies (the system's ``primal'' variables).
\end{remark}

\smallskip

\begin{remark}[Well-posedness]
Importantly, the dynamics \eqref{eq.PD} are \emph{well-posed} in the sense that they admit unique global solutions for every interior initial condition $x(0)\in\relint(\strat)$.
Since the vector field of \eqref{eq.PD} is not Lipschitz, perhaps the easiest way to see this is by using the integral representation \eqref{eq.PRL} of the dynamics:
indeed, given that the payoff functions $\pay_{k\alpha}$ are Lipschitz and bounded, the scores $y_{k\alpha}(t)$ will remain finite for all $t\geq0$, so interior solutions $x(t) = \choice(y(t))$ of \eqref{eq.PD} will be defined for all $t\geq0$.

Moreover, even though the dynamics \eqref{eq.PD} are technically defined only on the relative interior of the game's strategy space, the steepness and regularity requirements of Definition \ref{def.choice} allow us to extend the dynamics to the boundary $\bd(\strat)$ of $\strat$ by continuity (i.e. by writing $1/\theta''(x_{k\alpha}) = \theta'(x_{k\alpha})/\theta''(x_{k\alpha}) = 0$ when $x_{k\alpha}=0$).
By doing just that, every subface $\strat'$ of $\strat$ will be forward invariant under \eqref{eq.PD}, so the class of penalty-regulated dynamics may be seen as a subclass of the imitative dynamics introduced by \citeor{BW96} (see also \citeor{Weibull95}).
\end{remark}

\smallskip

\begin{remark}[Sharpened choices]
In addition to tuning the discount rate of the learning scheme \eqref{eq.PRL}, players can also sharpen their smooth best response model by replacing the choice stage \eqref{eq.choice} with
\begin{equation}
x_{k}
	= \choice_{k}(\ctemp_{k} y_{k})
\end{equation}
for some $\ctemp_{k} > 0$.
The choice parameters $\ctemp_{k}$ may thus be viewed as (player-specific) \emph{inverse temperatures}:
as $\ctemp_{k}\to\infty$, the choice map of player $k$ freezes down to the $\argmax$ operator, whereas in the limit $\ctemp_{k}\to0$, player $k$ will tend to mix actions uniformly, irrespectively of their performance scores.

In this context, the same reasoning as before leads to the rate-adjusted dynamics:
\begin{flalign}
\label{eq.varPD}
\tag{\ref*{eq.PD}$_{\ctemp}$}
\dot x_{k\alpha}
	&= \frac{\ctemp_{k}}{\theta_{k}''(x_{k\alpha})}
	\left[
	\pay_{k\alpha}(x) - \Theta_{k}''(x_{k}) \insum_{\beta}^{k} \frac{\pay_{k\beta}(x)}{\theta_{k}''(x_{k\beta})}
	\right]
	\notag\\
	&- \frac{\temp}{\theta_{k}''(x_{k\alpha})}
	\left[
	\theta_{k}'(x_{k\alpha}) - \Theta_{k}''(x_{k}) \insum_{\beta}^{k} \frac{\theta_{k}'(x_{k\beta})}{\theta''(x_{k\beta})}
	\right].
\end{flalign}
We thus see that the parameters $\temp$ and $\ctemp$ play very different roles in \eqref{eq.varPD}:
the discount rate $\temp$ affects only the game-independent penalty term of \eqref{eq.varPD} whereas $\ctemp_{k}$ affects only the term which is driven by the game's payoffs.
\end{remark}

\begin{figure}
\centering
\subfigure{
\includegraphics[width=170pt]{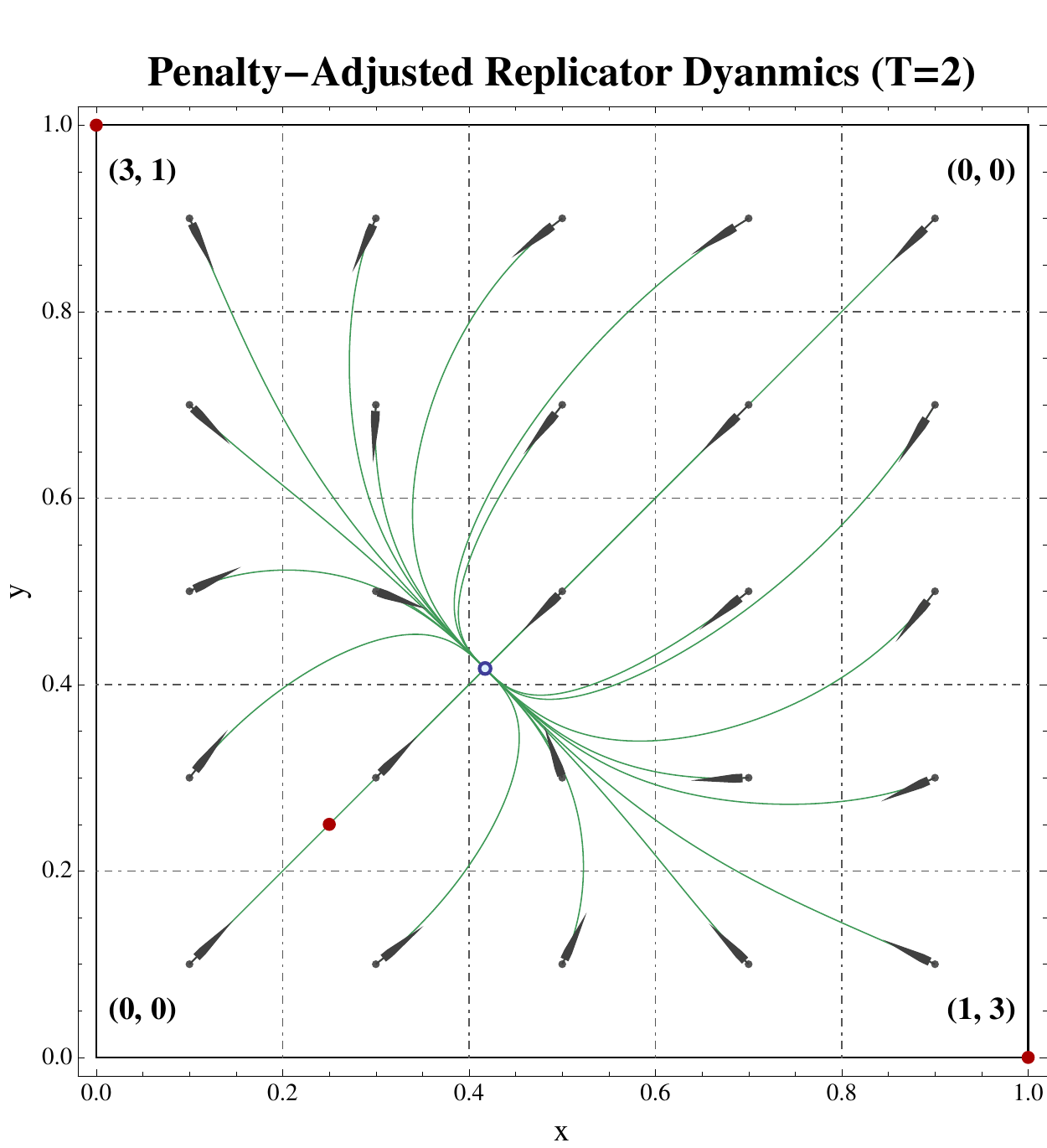}}
\hfill
\subfigure{
\includegraphics[width=170pt]{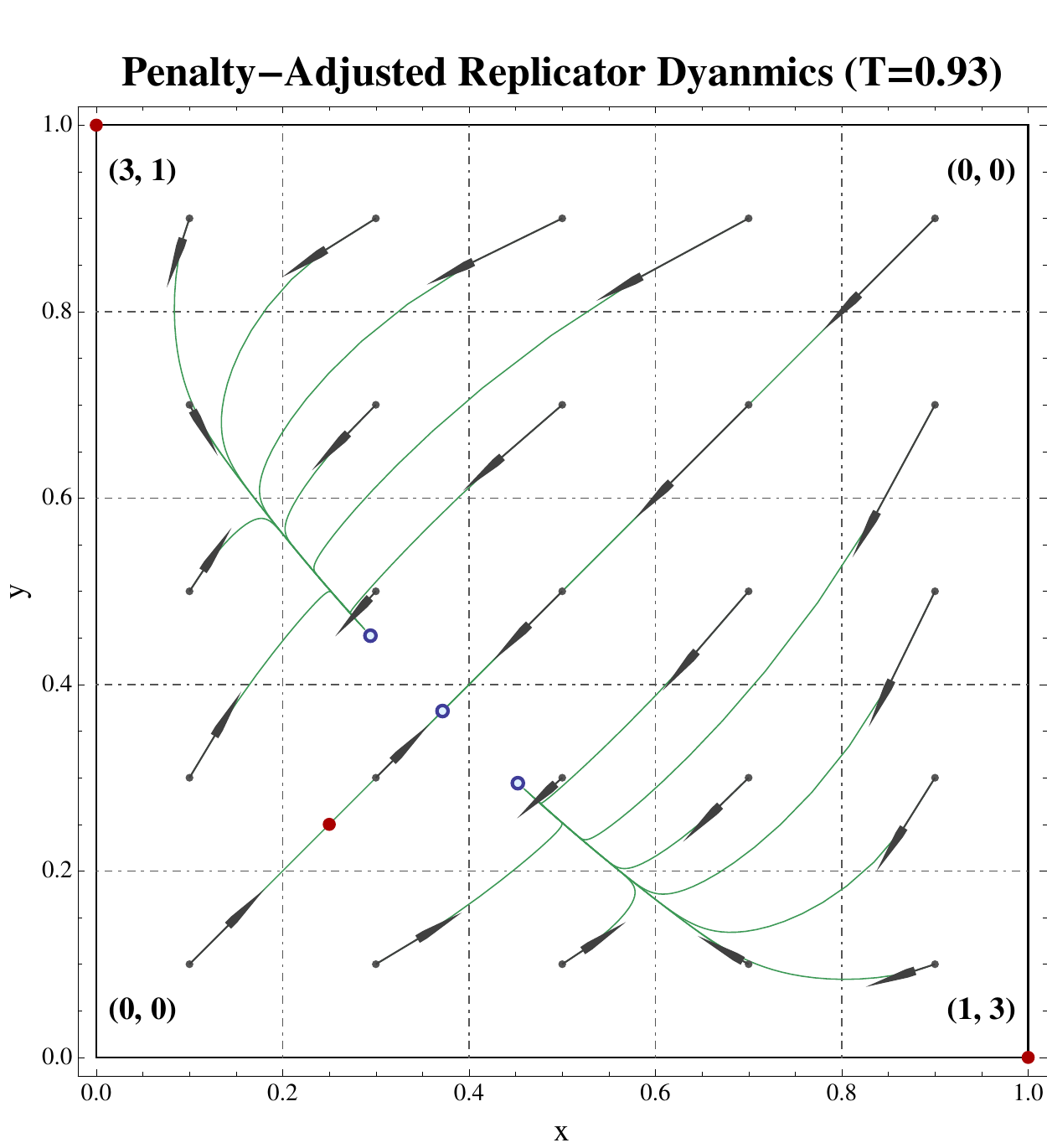}}

\subfigure{
\includegraphics[width=170pt]{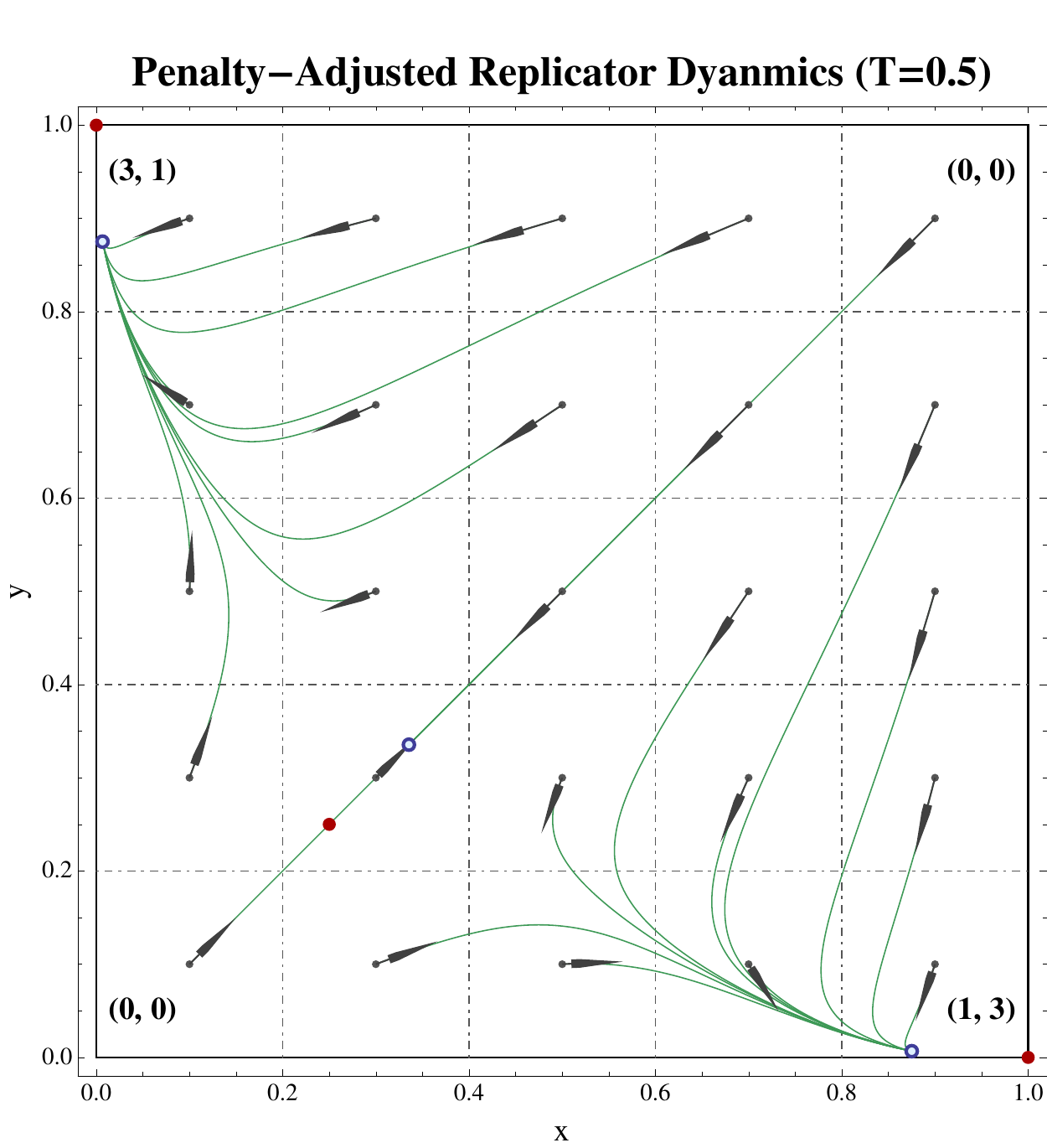}}
\hfill
\subfigure{
\includegraphics[width=170pt]{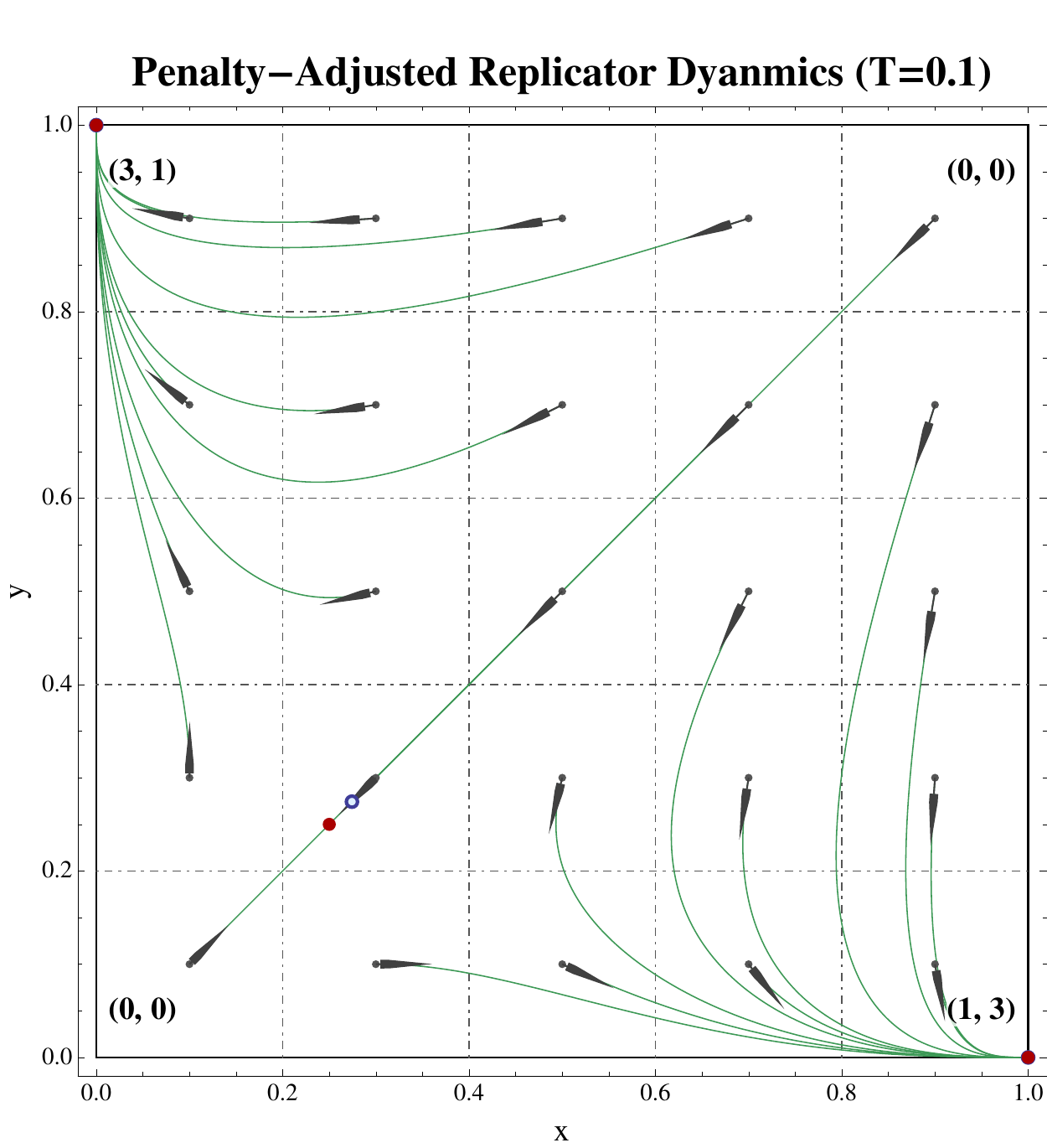}}
\caption{\footnotesize
Phase portraits of the penalty-adjusted replicator dynamics \eqref{eq.TRD} in a $2\times2$ potential game
(Nash equilibria are depicted in dark red and interior rest points in light/dark blue; for the game's payoffs, see the vertex labels).
For high discount rates $\temp\gg0$, the dynamics fail to keep track of the game's payoffs and their only rest point is a global attractor which approaches the barycenter of $\strat$ as $\temp\to+\infty$ (corresponding to a \ac{QRE} of very low rationality level).
As the players' discount rate drops down to the critical value $T_{c}\approx 0.935$, the globally stable \ac{QRE} becomes unstable and undergoes a supercritical pitchfork bifurcation (a phase transition) which results in the appearance of two asymptotically stable \acp{QRE} that approach the strict Nash equilibria of the game as $\temp \to 0^{+}$.}
\label{fig.portraits}
\end{figure}


\section{Long-run rationality analysis}
\label{sec.deterministic}

In this section, our aim will be to analyze the asymptotic properties of the penalty-regulated dynamics \eqref{eq.PD}
with respect to standard game-theoretic solution concepts.
Thus, in conjunction with the notion of Nash equilibrium, we will also focus on the widely studied concept of \emph{\aclp{QRE}}:

\smallskip

\begin{definition}[\citeor{McKP95}]
Let $\game\defeq\game(\play,\act,\pay)$ be a finite game and assume that each player $k\in\play$ is endowed with a quantal response function $\choice_{k}\from\R^{\act_{k}}\to\strat_{k}$ (cf. Definition \ref{def.choice}).
We will say that $q = (q_{1},\dotsc,q_{N})\in\strat$ is a \emph{\acl{QRE}} (\acs{QRE}) of $\game$ with respect to $\choice$ (or a \emph{$\choice$-equilibrium} for short) when, for some $\rlvl\geq0$ and for all $k\in\play$:
\begin{equation}
\tag{QRE}
q_{k}
	= \choice_{k}\left(\rlvl\pay_{k}(q)\right),
\end{equation}
where $\pay_{k}(q) = (\pay_{k\alpha}(q))_{\alpha\in\act_{k}} \in\R^{\act_{k}}$ denotes here the payoff vector of player $k$.
More generally, we will say that $q\in\strat$ is a \emph{restricted} \ac{QRE} of $\game$ if it is a \ac{QRE} of some restriction $\game'$ of $\game$.
\end{definition}

\smallskip

The scale parameter $\rlvl\geq0$ will be called the \emph{rationality level} of the \ac{QRE} in question.
Obviously, when $\rlvl=0$, \ac{QRE} have no ties to the game's payoffs;
at the other end of the spectrum, when $\rlvl\to\infty$, quantal response functions approach best responses and the notion of a \ac{QRE} approximates smoothly that of a Nash equilibrium.
To see this in more detail, let $q^{\ast}\in\strat$ be a Nash equilibrium of $\game$, and let $\gamma\from U\to\strat$ be a smooth curve on $\strat$ defined on a half-infinite interval of the form $U = [a,+\infty)$, $a\in\R$.
We will then say that $\gamma$ is a \emph{$\choice$-path to $q^{\ast}$} when $\gamma(\rlvl)$ is a $\choice$-equilibrium of $\game$ with rationality level $\rlvl$ and $\lim_{\rlvl\to\infty} \gamma(\rlvl) = q^{\ast}$;
in a similar vein, we will say that $q\in\strat$ is a \emph{$\choice$-approximation} of $q^{\ast}$ when $q$ is itself a $\choice$-equilibrium and there is a $\choice$-path joining $q$ to $q^{\ast}$ (\citeor{vanDamme87} uses the terminology \emph{approachable}).

\smallskip

\begin{example}
By far the most widely used specification of a \ac{QRE} is the \emph{logit equilibrium} which corresponds to the Gibbs choice map (\ref{eq.Gibbs}):
in particular, we will say that $q\in\strat$ is a logit equilibrium of $\game$ when $q_{k\alpha} = \exp(\rlvl \pay_{k\alpha}(q))/\insum_{\beta} \exp(\rlvl \pay_{k\beta}(q))$ for all $\alpha\in\act_{k}$, $k\in\play$.
\end{example}

\subsection{Stability analysis}
\label{sec.stability}

We begin by linking the rest points of \eqref{eq.PD} to the game's \acp{QRE}:

\begin{proposition}
\label{prop.restpoints}
Let $\game\defeq\game(\play,\act,\pay)$ be a finite game
and assume that each player $k\in\play$ is endowed with a quantal response function $\choice_{k}\from\R^{\act_{k}}\to\strat_{k}$
Then:
\begin{enumerate}
\item
For $\temp>0$, the rest points of the penalty-regulated dynamics \eqref{eq.PD} coincide with the restricted \ac{QRE} of $\game$ with rationality level $\rlvl = 1/\temp$.
\item
For $\temp=0$, the rest points of \eqref{eq.PD} are the restricted Nash equilibria of $\game$.
\end{enumerate}
\end{proposition}

\begin{proof}{Proof.}
Since the proposition concerns restricted equilibria, it suffices to establish our assertion for interior rest points;
given that the faces of $\strat$ are forward-invariant under the dynamics \eqref{eq.PD}, the general claim follows by descending to an appropriate restriction $\game'$ of $\game$.

To wit, \eqref{eq.PD} implies that any interior rest point $q\in\relint(\strat)$ will have $\pay_{k\alpha}(q) - \temp\theta_{k}'(q_{k\alpha}) = \pay_{k\beta}(q) - \temp\theta_{k}'(q_{k\beta})$ for all $\alpha,\beta\in\act_{k}$ and for all $k\in\play$.
As such, if $\temp=0$, we will have $\pay_{k\alpha}(q) = \pay_{k\beta}(q)$ for all $\alpha,\beta\in\act_{k}$, i.e. $q$ will be a Nash equilibrium of $\game$;
otherwise, for $\temp>0$, a comparison with the \ac{KKT} conditions \eqref{eq.choice.KKT} implies that $q$ is the (unique) solution of the maximization problem:
\begin{flalign}
q_{k}
	&= \argmax_{x_{k}\in\strat_{k}} \Big\{\insum_{\beta}^{k} x_{k\beta} \pay_{k\beta}(x_{k};q_{-k}) - \temp h_{k}(x_{k}) \Big\}
	\notag\\
	&= \argmax_{x_{k}\in\strat_{k}} \Big\{\insum_{\beta}^{k} x_{k\beta} \cdot \temp^{-1} \pay_{k\beta}(x_{k};q_{-k}) - h_{k}(x_{k}) \Big\}
	= \choice_{k}\left(\temp^{-1}\pay_{k}(q)\right),
\end{flalign}
i.e. $q$ is a $\choice$-equilibrium of $\game$ with rationality level $\rlvl = 1/\temp$.
\end{proof}

\medskip

Proposition \ref{prop.restpoints} shows that the discount rate $\temp$ of the dynamics \eqref{eq.PD} plays a double role:
on the one hand, it determines the discount rate of the players' assessment phase \eqref{eq.score}, so it reflects the importance that players give to past observations;
on the other hand, $\temp$ also determines the rationality level of the rest points of \eqref{eq.PD}, measuring how far the stationary points of the players' learning process are from being Nash.
That being said, stationarity does not capture the long-run behavior of a dynamical system, so the rest of our analysis will be focused on the asymptotic properties of \eqref{eq.PD}.
To that end, we begin with the special case of potential games where the players' payoff functions are aligned along a potential function in the sense of \eqref{eq.potential};
in this context, the game's potential function is ``almost'' increasing along the solution orbits of \eqref{eq.PD} if $\temp$ is small enough:

\begin{lemma}
\label{lem.Lyapunov}
Let $\game\equiv\game(\play,\act,\pay)$ be a finite game with potential $U$ and assume that each player $k\in\play$ is endowed with a decomposable penalty function $h_{k}\from\strat_{k}\to\R$.
Then, the function
\begin{equation}
\label{eq.Helmholtz}
F(x)
	\defeq \temp \sum_{k\in\play} h_{k}(x_{k}) - U(x)
\end{equation}
is Lyapunov for the penalty-regulated dynamics \eqref{eq.PD}:
for any interior orbit $x(t)$ of \eqref{eq.PD}, we have $\ddt F(x(t)) \leq 0$ with equality if and only if $x(0)$ is a \ac{QRE} of $\game$.
\end{lemma}

\begin{proof}{Proof.}
By differentiating $F$, we readily obtain:
\begin{equation}
\label{eq.Fder0}
\frac{\pd F}{\pd x_{k\alpha}}
	= \temp \theta_{k}'(x_{k\alpha}) - \pay_{k\alpha}(x),
\end{equation}
where $\theta_{k}$ is the kernel of the penalty function of player $k$ and we have used the potential property \eqref{eq.potential} of $\game$ to write $\frac{\pd U}{\pd x_{k\alpha}} = \pay_{k\alpha}$.
Hence, for any interior orbit $x(t)$ of \eqref{eq.PD}, some algebra yields:
\begin{flalign}
\label{eq.Fder1}
\frac{dF}{dt}
	&= \insum_{k} \insum_{\alpha}^{k} \frac{\pd F}{\pd x_{k\alpha}} \dot x_{k\alpha}
	\notag\\
	&= -\insum_{k} \insum_{\alpha}^{k} \frac{1}{\theta''(x_{k\alpha})}
	\left(T \theta_{k}'(x_{k\alpha}) - \pay_{k\alpha}(x)\right)^{2}
	\notag\\
	&+ \insum_{k} \Theta_{k}''(x_{k})
	\left[
	\insum_{\alpha}^{k} \frac{1}{\theta_{k}''(x_{k\alpha})} \left(T \theta_{k}'(x_{k\alpha}) - \pay_{k\alpha}(x)\right)
	\right]^{2}
	\notag\\
	&= -\insum_{k} \frac{1}{\Theta_{k}''(x_{k})}
	\left[
	\insum_{\alpha}^{k} \pi_{k\alpha} w_{k\alpha}^{2} - \left(\insum_{\alpha}^{k} \pi_{k\alpha} w_{k\alpha}\right)^{2}
	\right],
\end{flalign}
where we have set $\pi_{k\alpha} = \Theta_{k}''(x_{k\alpha})/\theta_{k}''(x_{k\alpha})$ and $w_{k\alpha} = \temp \theta_{k}'(x_{k\alpha}) - \pay_{k\alpha}(x)$.
Since $\pi_{k\alpha}\geq0$ and $\sum_{\alpha}^{k} \pi_{k\alpha} = 1$ by construction, our assertion follows by Jensen's inequality (simply note that the condition $w_{k\alpha} = w_{k\beta}$ for all $\alpha,\beta\in\act_{k}$ is only satisfied at the \ac{QRE} of $\game$).
\end{proof}

\smallskip

Needless to say, Lemma \ref{lem.Lyapunov} can be easily extended to orbits lying in any subface $\strat'$ of $\strat$ by considering the game's restricted \acp{QRE}.
Indeed, given that the restricted \acp{QRE} of $\game$ that are supported in a subface $\strat'$ of $\strat$ coincide with the local minimizers of $F|_{\strat'}$, Lemma \ref{lem.Lyapunov} gives:

\begin{proposition}
\label{prop.potential}
Let $x(t)$ be a solution orbit of the penalty-regulated dynamics \eqref{eq.PD} for a potential game $\game$.
Then:
\begin{enumerate}
\item
For $\temp>0$, $x(t)$ converges to a restricted \ac{QRE} of $\game$ with the same support as $x(0)$.
\item
For $\temp=0$, $x(t)$ converges to a restricted Nash equilibrium with support contained in that of $x(0)$.
\end{enumerate}
\end{proposition}

\smallskip

Proposition \ref{prop.potential} implies that interior solutions of \eqref{eq.PD} for $\temp>0$ can only converge to interior points in potential games;
as we show below, this behavior actually applies to \emph{any} finite game:

\begin{proposition}
\label{prop.omega}
Let $x(t)$ be an interior solution orbit of the penalty-regulated dynamics \eqref{eq.PD} for $\temp>0$.
Then, any $\omega$-limit of $x(t)$ is interior;
in particular, the boundary $\bd(\strat)$ of $\strat$ repels all interior orbits.%
\footnote{Of course, orbits that start on $\bd(\strat)$ will remain in $\bd(\strat)$ for all $t\geq0$.}
\end{proposition}

\begin{proof}{Proof.}
Our proof will be based on the integral representation \eqref{eq.PRL} of the penalty-regulated dynamics \eqref{eq.PD}.
Indeed, with $\pay_{k\alpha}$ bounded on $\strat$ (say by some $M>0$), we get:
\begin{multline}
\label{eq.score.bound}
|y_{k\alpha}(t)|
	\leq \abs{y_{k\alpha}(0)} e^{-\temp t} + \int_{0}^{t} e^{-\temp(t-s)} \abs{\pay_{k\alpha}(x(s))} \dd s
	\\
	\leq \abs{y_{k\alpha}(0)} e^{-\temp t} + \frac{M}{\temp}\left(1 - e^{-\temp t}\right),
\end{multline}
so any $\omega$-limit of \eqref{eq.PRL} must lie in the rectangle $C^{\temp} = \prod_{k} C_{k}^{\temp}$ where $C_{k}^{\temp} = \{y_{k}\in\R^{\act_{k}}: \abs{y_{k\alpha}} \leq M/T\}$.
However, since $\choice_{k}$ maps $\R^{\act_{k}}$ to $\relint(\strat_{k})$ continuously, $\choice_{k}(C_{k}^{\temp})$ will be a compact set contained in $\relint(\strat_{k})$, and our assertion follows by recalling that $x(t) = \choice(y(t))$.
\end{proof}

\smallskip

The above highlights an important connection between the score variables $y_{k\alpha}$ and the players' mixed strategy shares $x_{k\alpha}$:
the asymptotic boundedness of the scores implies that the solution orbits of \eqref{eq.PD} will be repelled by the boundary $\bd(\strat)$ of the game's strategy space.
On the other hand, this connection is not a two-way street because the smooth best response map $\choice_{k}\from\R^{\act_{k}}\to\strat_{k}$ is not a diffeomorphism:
$\choice_{k}(y) = \choice_{k}(y + c(1,\dotsc,1))$ for every $c\in\R$, so $\choice_{k}$ collapses the directions that are parallel to $(1,\dotsc,1)$.

To obtain a diffeomorphic set of score-like variables, let $\act_{k} = \{\alpha_{k,0},\alpha_{k,1},\dotsc\}$ denote the action set of player $k$ and consider the \emph{relative scores}:
\begin{equation}
\label{eq.zscore}
z_{k\mu}
	= \theta_{k}'(x_{k\mu}) - \theta_{k}'(x_{k,0})
	= y_{k\mu} - y_{k,0},
	\quad
	\mu = 1,2,\dotsc,
\end{equation}
where the last equality follows from the \ac{KKT} conditions \eqref{eq.choice.KKT}.
In words, $z_{k\mu}$ simply measures the score difference between the $\mu$-th action of player $k$ and the ``flagged'' $0$-th action;
as such, the evolution of $z_{k\mu}$ over time will be:
\begin{equation}
\label{eq.ZD}
\dot z_{k\mu}
	= \dot y_{k\mu} - \dot y_{k,0}
	= \pay_{k\mu} - \temp y_{k\mu} - (\pay_{k,0} - \temp y_{k,0})
	= \Delta\pay_{k\mu} - \temp z_{k\mu},
\end{equation}
where $\Delta\pay_{k\mu} = \pay_{k\mu} - \pay_{k,0}$.
In particular, these relative scores remain unchanged if a players' payoffs are offset by the same amount, a fact which is reflected in the following:

\smallskip
\begin{lemma}
\label{lem.diffeo}
Let $\act_{k,0} = \act_{k}\exclude{\alpha_{k,0}} = \{\alpha_{k,1},\alpha_{k,2},\dotsc\}$.
Then, with notation as above, the map $\iota_{k}\from x_{k}\mapsto z_{k}$ is a diffeomorphism from $\relint(\strat_{k})$ to $\R^{\act_{k,0}}$.
\end{lemma}
\smallskip

\begin{proof}{Proof.}
We begin by showing that $\iota_{k}$ is surjective.
Indeed, let $z_{k}\in\R^{\act_{k,0}}$ and set $y_{k} = (0,z_{k,0},z_{k,1},\dotsc)$.
Then, if $x_{k} = \choice_{k}(y_{k})$, the \ac{KKT} conditions \eqref{eq.choice.KKT} become $-\theta_{k}'(x_{k,0}) = \zeta_{k}$ and $z_{k\mu} - \theta_{k}'(x_{k\mu}) = \zeta_{k}$ for all $\mu\in\act_{k,0}$.
This gives $z_{k\mu} = \theta_{k}'(x_{k\mu}) - \theta_{k}'(x_{k,0})$ for all $\mu\in\act_{k,0}$, i.e. $\iota_{k}$ is onto.

Assume now that $\theta_{k}'(x_{k\mu}) - \theta_{k}'(x_{k\mu}) = \theta_{k}'(x_{k\mu}') - \theta_{k}'(x_{k,0}')$ for some $x_{k}, x_{k}'\in\relint(\strat_{k})$.
A trivial rearrangement gives $\theta_{k}'(x_{k\alpha}) - \theta_{k}'(x_{k\alpha}') = \theta_{k}'(x_{k\beta}) - \theta_{k}'(x_{k\beta}')$ for all $\alpha,\beta\in\act_{k}$, so there exists some $\xi_{k}\in\R$ such that $\theta_{k}'(x_{k\alpha}') = \xi_{k} + \theta_{k}'(x_{k\alpha})$ for all $\alpha\in\act_{k}$.
With $\theta_{k}'$ strictly increasing, this implies that $x_{k\alpha}' > x_{k\alpha}$ (resp. $x_{k\alpha}' < x_{k\alpha}$, resp. $x_{k\alpha}' = x_{k\alpha}$) for all $\alpha\in\act_{k}$ if $\xi_{k}>0$ (resp. $\xi_{k}<0$, resp. $\xi_{k}=0$).
However, given that the components of $x_{k}$ and $x_{k}'$ both sum to $1$, we must have $x_{k\alpha}' = x_{k\alpha}$ for all $\alpha$ i.e. the map $x_{k}\mapsto z_{k}$ is injective.

Now, treating $x_{k,0} = 1 - \sum_{\mu\in\act_{k,0}} x_{k\mu}$ as a dependent variable, the Jacobian matrix of $\iota_{k}$ will be:
\begin{equation}
\label{eq.Jac}
J_{\mu\nu}^{k}
	= \frac{\pd z_{k\mu}}{\pd x_{k\nu}}
	= \theta_{k}''(x_{k\mu}) \delta_{\mu\nu} + \theta_{k}'\left(1 - \txs\insum_{\mu\in\act_{k,0}} x_{k\mu}\right).
\end{equation}
Then, letting $\theta_{k\mu}'' = \theta_{k}''(x_{k\mu})$ and $\theta_{k,0}'' = \theta_{k}'\left(1 - \txs\insum_{\mu}^{k} x_{k\mu}\right)$, it is easy to see that $J_{\mu\nu}^{k}$ is invertible with inverse matrix
\begin{equation}
\label{eq.Jac-inv}
J_{k}^{\mu\nu}
	= \frac{\delta_{\mu\nu}}{\theta_{k\mu}''} - \frac{\Theta_{k}''}{\theta_{k\mu}'' \theta_{k\nu}''},
\end{equation}
where $\Theta_{k}'' = \left(\insum_{\alpha\in\act_{k}} 1/\theta_{k\alpha}''\right)^{-1}$.
Indeed, dropping the index $k$ for simplicity, a simple inspection gives:
\begin{flalign}
\sum_{\nu\neq0} J_{\mu\nu} J^{\nu\rho}
	& = \sum_{\nu\neq0}
	\left(\theta_{\mu}'' \delta_{\mu\nu} + \theta_{0}''\right)
	\cdot
	\left(\delta_{\nu\rho}/\theta_{\nu}'' - \Theta''/(\theta_{\nu}'' \theta_{\rho}'')\right)
	\notag\\[.5ex]
	& =\sum_{\nu\neq0}
	\left(
	\theta_{\mu}'' \delta_{\mu\nu} \delta_{\nu\rho} /\theta_{\nu}''
	+ \theta_{0}'' \delta_{\nu\rho}/\theta_{\nu}''
	- \theta_{\mu}'' \Theta'' \delta_{\mu\nu}/(\theta_{\nu}'' \theta_{\rho}'')
	- \theta_{0}'' \Theta'' /(\theta_{\nu}'' \theta_{\rho}'')
	\right)
	\notag\\[.5ex]
	& = \delta_{\mu\rho} + \theta_{0}''/\theta_{\rho}'' - \Theta''/\theta_{\rho}''
	- \theta_{0}'' \Theta'' \sum_{\nu\neq0} 1 \big/(\theta_{\nu}'' \theta_{\rho}'')
	=\delta_{\mu\rho}.
\end{flalign}
The above shows that $\iota_{k}$ is a smooth immersion;
since $\iota_{k}$ is bijective, it will also be a diffeomorphism by the inverse function theorem, and our proof is complete.
\end{proof}

\smallskip

With this diffeomorphism at hand, we now show that the penalty-regulated dynamics \eqref{eq.PD} are contracting if $\temp>0$ (a result which ties in well with Proposition \ref{prop.omega} above):

\begin{proposition}
\label{prop.divergence}
Let $K_{0}\subseteq\relint(\strat)$ be a compact set of interior initial conditions and let $K_{t} = \{x(t): x(0)\in K_{0}\}$ be its evolution under the dynamics \eqref{eq.PD}.
Then, there exists a volume form $\vol$ on $\relint(\strat)$ such that
\begin{equation}
\label{eq.volume}
\vol(K_{t}) = \vol(K_{0})\,\exp(- \temp A_{0} t),
\end{equation}
where $A_{0} = \sum_{k}(\card(\act_{k})-1)$.
In other words, the penalty-regulated dynamics \eqref{eq.PD} are incompressible for $\temp=0$ and contracting for $\temp>0$.
\end{proposition}

\begin{proof}{Proof.}
Our proof will be based on the relative score variables $z_{k\mu}$ of \eqref{eq.zscore}.
Indeed, let $U_{0}$ be an open set of $\prod_{k}\R^{\act_{k,0}}$ and let $W_{k\mu} = \Delta \pay_{k\mu}(x) - \temp z_{k\mu}$ denote the RHS of \eqref{eq.ZD}.
Liouville's theorem then gives
\begin{equation}
\label{eq.zvolume}
\frac{d}{dt} \vol_{0}(U_{t}) = \int_{U_{t}} \Div W \:\mathrm{d} \Omega_{0},
\end{equation}
where $\mathrm{d}\Omega_{0} = \bigwedge_{k,\mu} dz_{k\mu}$ is the ordinary Euclidean volume form on $\prod_{k}\R^{\act_{k,0}}$, $\vol_{0}$ denotes the associated (Lebesgue) measure on $\prod_{k}\R^{\act_{k,0}}$ and $U_{t}$ is the image of $U_{0}$ at time $t$ under \eqref{eq.ZD}.
However, given that $\Delta\pay_{k\mu}$ does not depend on $z_{k}$ (recall that $\pay_{k\mu}$ and $\pay_{k,0}$ themselves do not depend on $x_{k}$), we will also have $\frac{\pd W_{k\mu}}{\pd z_{k\mu}} = -\temp$.
Hence, summing over all $\mu\in\act_{k,0}$ and $k\in\play$, we obtain $\Div W = - \insum_{k}(\card(\act_{k})-1)\temp = -A_{0} \temp$ and \eqref{eq.zvolume} yields $\vol(U_{t}) = \vol(U_{0}) \exp(-A_{0}\temp t)$.

In view of the above, let $\iota = (\iota_{1},\dotsc,\iota_{N}) \from \relint(\strat) \to \prod_{k}\R^{\act_{k,0}}$ be the product of the ``relative score'' diffeomorphisms of Lemma \ref{lem.diffeo}, and let $\vol = \iota^{\ast}\vol_{0}$ be the pullback of the Euclidean volume $\vol_{0}(\cdot)$ on $\prod_{k}\R^{\act_{k,0}}$ to $\relint(\strat)$, i.e. $\vol(K) = \vol_{0}(\iota(K))$ for any (Borel) $K\subseteq\relint(\strat)$.
Then, letting $U_{0} = \iota(K_{0})$, our assertion follows from the volume evolution equation above and the fact that $\iota(x(t))$ solves \eqref{eq.ZD} whenever $x(t)$ solves \eqref{eq.PD}.
\end{proof}

\smallskip

When applied to \eqref{eq.TRD} for $\temp = 0$, Proposition \ref{prop.divergence} yields the classical result that the asymmetric replicator dynamics \eqref{eq.RD} are incompressible \textendash\ and thus do not admit interior attractors (\citeor{HS98}, \citeor{RW95}).%
\footnote{This does not hold in the symmetric case because the symmetrized payoff $\pay_{\alpha}(x)$ depends on $x_{\alpha}$.}
We thus see that incompressibility characterizes a much more general class of dynamics:
in our learning context, it simply reflects the fact that players weigh their past observations uniformly (neither discounting, nor reinforcing them).

That said, in the case of the replicator dynamics, we have a significantly clearer picture regarding the stability and attraction properties of a game's equilibria;
in particular, the \emph{folk theorem of evolutionary game theory} (\citeor{HS98}) states that:%
\footnote{Recall that $q\in\strat$ is said to be \emph{Lyapunov stable} (or \emph{stable}) when for every neighborhood $U$ of $q$ in $\strat$, there exists a neighborhood $V$ of $q$ in $\strat$ such that if $x(0)\in V$ then $x(t)\in U$ for all $t\geq0$;
$q$ is called \emph{attracting} when there exists a neighborhood $U$ of $q$ in $\strat$ such that $\lim_{t\to\infty}x(t) = q$ if $x(0)\in U$;
finally, $q$ is called \emph{asymptotically stable} when it is both stable and attracting.}
\smallskip
\begin{enumerate}
\addtolength{\itemsep}{2pt}
\item
If an interior trajectory converges, its limit is Nash.
\item
If a state is Lyapunov stable, then it is also Nash.
\item
A state is asymptotically stable if and only if it is a strict Nash equilibrium.
\end{enumerate}

\smallskip

By comparison, in the context of the penalty-regulated game dynamics \eqref{eq.PD}, we have:

\begin{theorem}
\label{thm.folk}
Let $\game\defeq\game(\play,\act,\pay)$ be a finite game, let $h_{k}\from\strat_{k}\to\R$ be a decomposable penalty function for each player $k\in\play$, and let $\choice_{k}\from\R^{\act_{k}}\to\strat_{k}$ denote each player's choice map.
Then, the penalty-regulated dynamics \eqref{eq.PD} have the following properties:
\smallskip
\begin{enumerate}
\addtolength{\itemsep}{2pt}
\item
For $\temp>0$, if $q\in\strat$ is Lyapunov stable then it is also a \ac{QRE} of $\game$;
moreover, if $q$ is a $\choice$-approximate strict Nash equilibrium and $\temp$ is small enough, then $q$ is also asymptotically stable.
\item
For $\temp=0$, if $q\in\strat$ is Lyapunov stable, then it is also a Nash equilibrium of $\game$;
furthermore, $q$ is asymptotically stable if and only if it is a strict Nash equilibrium of $\game$.
\end{enumerate}
\end{theorem}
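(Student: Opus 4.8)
The plan is to dispatch the three temperature regimes separately. In each case I first identify the Lyapunov stable states with rest points, so that Proposition \ref{prop.restpoints} labels them as \acp{QRE}, Nash equilibria, or \acp{QRE} of $-\game$, and then sharpen these into the stated stability statements. The three workhorses are Proposition \ref{prop.restpoints} (rest points $\leftrightarrow$ equilibria), the volume identity of Proposition \ref{prop.divergence} (which constrains attractors and fixes the sign of the trace of the linearization through $A_{0}\temp$), and the boundary behavior of Proposition \ref{prop.omega}; the delicate converse (asymptotic stability) statements I would handle by linearizing in the relative score coordinates $z_{k\mu}$ of (\ref{eq.ZD}). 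For $\temp>0$, the boundary-avoidance of Proposition \ref{prop.omega}(2) does most of the work: since interior orbits near $\bd(\strat)$ are eventually pushed into a region separated from the boundary, and since every neighborhood of a boundary point contains such interior points, no point of $\bd(\strat)$ can be Lyapunov stable; hence a stable $q$ lies in $\Int(\strat)$. Strict volume contraction ($A_{0}\temp>0$) then rules out the recurrence that a Lyapunov stable non-rest point would exhibit, forcing $\dot x(q)=0$, so that $q$ is an interior \ac{QRE} by Proposition \ref{prop.restpoints}.

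The case $\temp=0$ needs the score dynamics directly, since Proposition \ref{prop.restpoints} only gives \emph{restricted} Nash equilibria. Restricting (\ref{eq.ED}) to the supporting face of a Lyapunov stable $q$, the field $\dot z_{k\mu}=\Delta\pay_{k\mu}$ must vanish at $q$ — otherwise the relative scores drift monotonically and $q$ is not stable — so the payoffs of the supported actions are equalized and $q$ is a restricted Nash equilibrium. To upgrade this to a genuine Nash equilibrium I would use the unused-deviation argument: if some player $k$ had $\beta\notin\supp(q_{k})$ with $\pay_{k\beta}(q)$ strictly above the common supported payoff, then $\Delta\pay_{k\beta}>0$ near $q$, the corresponding relative score grows, and $x_{k\beta}$ is driven upward away from $q$, contradicting stability. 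Hence a Lyapunov stable $q$ admits no profitable deviation and is Nash.

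For the converse characterizations I would lean on Proposition \ref{prop.divergence}. At $\temp=0$ the dynamics are incompressible on $\strat$ and on every subface, so they admit no asymptotically stable state in the relative interior of any positive-dimensional face; an asymptotically stable $q$ is therefore a vertex, and a vertex is attracting precisely when its flagged action strictly dominates, i.e.\ when $q$ is a \emph{strict} Nash equilibrium. Conversely, at a strict Nash vertex one has $\Delta\pay_{k\mu}(q)<0$ for every unused $\mu$, so $\dot z_{k\mu}=\Delta\pay_{k\mu}$ points strictly inward; linearizing, all eigenvalues have negative real part and $q$ is asymptotically stable. For $\temp<0$ the expansion $A_{0}\temp<0$ makes the trace of the Jacobian, restricted to any supporting face, strictly positive, so every non-pure rest point — living in the relative interior of a positive-dimensional face — has an eigenvalue of positive real part and is unstable; that every vertex is, on the contrary, asymptotically stable is exactly Proposition \ref{prop.omega}(1), whose proof also supplies the monotone decay of the relative scores toward $-\infty$ that yields Lyapunov stability.

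The hard part is the remaining claim of (1): that for small $\temp>0$ a $\choice$-approximate strict Nash equilibrium is asymptotically stable. I would work in score coordinates, writing the dynamics as $\dot z = \Delta\pay(\choice_{0}(z)) - \temp z$ with rest point $z^{*}$ determined by $\temp z^{*}=\Delta\pay(\choice_{0}(z^{*}))$. Along the $\choice$-path, as $\temp\to0^{+}$ the equilibrium $q=\choice_{0}(z^{*})$ tends to the strict Nash vertex $q^{*}$, so that $\Delta\pay_{k\mu}(q)\to\Delta\pay_{k\mu}(q^{*})<0$ and $z^{*}_{k\mu}\sim\Delta\pay_{k\mu}(q^{*})/\temp\to-\infty$ in every unused direction; thus $z^{*}$ sits ever deeper in the saturation region of the choice map. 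The Jacobian is $J=D(\Delta\pay)\cdot D\choice_{0}(z^{*})-\temp\,I$, in which $-\temp\,I$ is stabilizing and the payoff term is controlled entirely by the sensitivity $D\choice_{0}(z^{*})$ of the choice map near the boundary. The crux — and the main obstacle — is to show that this sensitivity is small \emph{relative to} $\temp$, so that the real parts of all eigenvalues of $J$ remain negative; this is precisely the purpose of the regularity hypothesis on $h$, whose condition $h'/h''\to0$ at $\bd(\strat')$ bounds the decay of $D\choice_{0}$ as $z^{*}\to-\infty$. Granting this balance, a continuity/perturbation argument off the $\temp=0$ strict-Nash picture yields asymptotic stability for all sufficiently small $\temp>0$, completing the proof.
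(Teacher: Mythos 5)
Your outline reproduces the paper's architecture for most of the theorem: boundary exclusion through Proposition \ref{prop.omega}(2) plus Proposition \ref{prop.restpoints} for the first claim of (1); the score-drift contradiction for ``Lyapunov stable $\Rightarrow$ Nash'' at $\temp=0$ (your two-step version \textendash\ restricted rest point, then no profitable unused deviation \textendash\ is equivalent to the paper's single ad absurdum argument); and Proposition \ref{prop.omega}(1) together with the bound (\ref{eq.zscore.bound}) for the vertices when $\temp<0$. Your instability argument for non-pure rest points at $\temp<0$ \textendash\ the trace of the Jacobian of (\ref{eq.ZD}) on the supporting face equals the constant divergence $-A_{0}\temp>0$, so some eigenvalue has positive real part \textendash\ is a legitimate variant of the paper's volume-growth argument and rests on the same computation as Proposition \ref{prop.divergence}. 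There is, however, a genuine flaw in your $\temp=0$ case: you cannot establish asymptotic stability of a strict Nash vertex by ``linearizing''. In the $z$-coordinates the vertex sits at $z=(-\infty,\dotsc,-\infty)$, so there is no finite rest point to linearize at, while in the $x$-coordinates the field (\ref{eq.ED.kernel}) is degenerate at vertices for general regular kernels: the factors $1/\theta''(x_{k\mu})$ vanish there, so the linearization can be identically zero (only the Gibbs kernel yields the familiar nonzero eigenvalues $\Delta\pay_{k\mu}(q^{\ast})$). The paper avoids this with a nonlinear argument: forward invariance of the sets $U_{m}=\{z: z_{k\mu}<m\}$ and the monotone estimate $z_{k\mu}(t)\leq z_{k\mu}(0)-\eps t$ of (\ref{eq.escape}).

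The real gap is the step you yourself call the crux and then assume: that for a $\choice$-approximate strict equilibrium the sensitivity $D\choice_{0}(z^{\ast})$ is small relative to $\temp$ as $\temp\to0^{+}$. ``Granting this balance'' concedes the entire difficulty, and the fallback you propose \textendash\ a continuity/perturbation argument off the $\temp=0$ strict-Nash picture \textendash\ cannot work: as $\temp\to0^{+}$ the rest point $z^{\ast}(\temp)$ escapes to infinity, so there is no $\temp=0$ hyperbolic rest point to perturb from, and \emph{both} the stabilizing diagonal $-\temp I$ and the off-diagonal payoff terms of the Jacobian tend to zero, so stability is decided by the ratio of the two rates, not by continuity. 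The paper settles this by an explicit estimate (under the simplifying assumption that $h$ is decomposable with kernel $\theta$): by Lemma \ref{lem.inversion}, the entries of $D\choice_{0}(z^{\ast})=\hess(h_{0})^{-1}(q)$ are controlled by $1/\theta''(q_{k\mu}(\temp))$, and the identity
\begin{equation*}
\frac{1}{\temp\,\theta''(q_{k\mu}(\temp))}
	= \frac{\theta'(q_{k\mu}(\temp))}{\theta''(q_{k\mu}(\temp))}
	\cdot \frac{1}{\temp\,\theta'(q_{k\mu}(\temp))}
\end{equation*}
combines the quantal response condition $\temp\,\theta'(q_{k\mu}(\temp))\to\Delta\pay_{k\mu}(q^{\ast})<0$ (which keeps the second factor bounded) with the regularity condition $\theta'/\theta''\to0$ (which kills the first) to conclude $1/(\temp\,\theta''(q_{k\mu}))\to0$. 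This is what makes the off-diagonal entries of (\ref{eq.Wderivs}) $o(\temp)$, hence dominated by the diagonal $-\temp$ for small $\temp>0$, giving hyperbolicity and asymptotic stability. That computation \textendash\ equations (\ref{eq.Wderivs})\textendash(\ref{eq.limits}) in the paper \textendash\ is exactly what your proposal leaves unproven, so as it stands part (1) of the theorem is not established.
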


\begin{proof}{Proof.}
Our proof will be broken up in two parts depending on the discount rate $\temp$ of \eqref{eq.PD}:

\paragraph{The case $\temp>0$.}
Let $\temp>0$ and assume that $q\in\strat$ is Lyapunov stable (and, hence, stationary).
Clearly, if $q$ is interior, it must also be a \ac{QRE} of $\game$ by Proposition \ref{prop.restpoints}, so there is nothing to show.
Suppose therefore that $q\in\bd(\strat)$; then, by Proposition \ref{prop.omega}, we may pick a neighborhood $U$ of $q$ in $\strat$ such that $\cl(U)$ does not contain any $\omega$-limit points of the interior of $\strat$ under (\ref{eq.PD}).
However, since $q$ is Lyapunov stable, any interior solution that is wholly contained in $U$ must have an $\omega$-limit in $\cl(U)$, a contradiction.

Regarding the asymptotic stability of $\choice$-approximate strict equilibria,
assume without loss of generality that $q^{\ast} = (\alpha_{1,0},\dotsc,\alpha_{N,0})$ is a strict Nash equilibrium of $\game$ and let $q\equiv q(\temp)\in\strat$ be a $\choice$-approximation of $q^{\ast}$ with rationality level $\rlvl = 1/\temp$.
Furthermore, let $W_{k\mu} = \Delta \pay_{k\mu} - \temp z_{k\mu}$ and consider $\Delta\pay_{k\mu}$ as a function of only $x_{\ell\mu}$, $\mu\in\act_{\ell,0}$, by treating $x_{\ell,0} = 1 - \insum_{\mu} x_{\ell\mu}$ as a dependent variable.
Then, as in the proof of Lemma \ref{lem.diffeo}, a simple differentiation yields:
\begin{equation}
\label{eq.Wderivs}
\left.\frac{\pd W_{k\mu}}{\pd z_{\ell\nu}}\right|_{q} = 
\begin{cases}
-\temp
	&\text{if $\ell = k$, $\nu = \mu$,}\\
0
\quad
	&\text{if $\ell = k$, $\nu \neq \mu$,}\\	
	\insum_{\rho\in\act_{\ell,0}} J_{\ell}^{\nu\rho}(q) \frac{\pd}{\pd w_{\ell\rho}} \Delta\pay_{k\mu}
	\quad
	&\text{otherwise,}
\end{cases}
\end{equation}
where $J_{\ell}^{\nu\rho}(q)$ denotes the inverse Jacobian matrix \eqref{eq.Jac-inv} of the map $x\mapsto z$ evaluated at $q$.

We will show that all the elements of \eqref{eq.Wderivs} with $\ell \neq k$ or $\mu\neq \nu$ are of order $o(\temp)$ as $\temp\to0^{+}$, so \eqref{eq.Wderivs} is dominated by the diagonal elements $\frac{\pd W_{k\mu}}{\pd z_{k\mu}} = -\temp$ for small $\temp$.
To do so, it suffices to show that $\temp^{-1} J_{\ell}^{\nu\rho} \to +\infty$ as $\temp\to0^{+}$;
however, since $q$ is a $\choice$-approximation of the strict equilibrium $q^{\ast} = (\alpha_{1,0},\dotsc,\alpha_{N,0})$, we will also have $q_{k\mu}\equiv q_{k\mu}(\temp)\to q_{k\mu}^{\ast} = 0$ and $q_{k,0}\to q_{k,0}^{\ast} = 1$ as $\temp\to0^{+}$.
Moreover, recalling that $q$ is a \ac{QRE} of $\game$ with rationality level $\rlvl = 1/\temp$, we will also have $\Delta \pay_{k\mu}(q) = \temp \theta'(q_{k\mu}) - \temp \theta'(q_{k,0})$, implying in turn that $\temp \theta'(q_{k\mu}(\temp)) \to \Delta\pay_{k\mu}(q^{\ast}) <0$ as $\temp\to0^{+}$.
We thus obtain:
\begin{equation}
\label{eq.limits}
\frac{1}{\temp\theta''(q_{k\mu}(\temp))}
	= \frac{\theta'(q_{k\mu}(\temp))}{\theta''(q_{k\mu}(\temp))} \frac{1}{\temp \theta'(q_{k\mu}(\temp))}
	\to \frac{0}{\Delta\pay_{k\mu}(q^{\ast})}
	= 0,
\end{equation}
and hence, on account of \eqref{eq.Jac-inv} and \eqref{eq.limits}, we will have $J_{\ell}^{\nu\rho} = o(\temp)$ for small $\temp$.
By continuity, the eigenvalues of \eqref{eq.Wderivs} evaluated at $q\equiv q(\temp)$ will all be negative if $\temp>0$ is small enough, so $q$ will be a hyperbolic rest point of \eqref{eq.ZD};
by the Hartman-Grobman theorem it will then also be structurally stable, and hence asymptotically stable as well.

\paragraph{The case $\temp=0$.}

For $\temp=0$, let $q$ be Lyapunov stable so that every neighborhood $U$ of $q$ in $\strat$ admits an interior orbit $x(t)$ that stays in $U$ for all $t\geq0$;
we then claim that $q$ is Nash.
Indeed, assume ad abusrdum that $\alpha_{k,0}\in\supp(q)$ has $\pay_{k,0}(q) < \pay_{k\mu}(q)$ for some $\mu\in\act_{k,0} \defeq \act_{k}\exclude{\alpha_{k,0}}$,
and let $U$ be a neighborhood of $q$ such that $x_{k,0} > q_{k,0}/2$ and $\Delta\pay_{k\mu}(x)\geq m>0$ for all $x\in U$.
Picking an orbit $x(t)$ that is wholly contained in $U$, the dynamics \eqref{eq.ZD} readily give $z_{k\mu}(t) \geq z_{k,0}(0) + m t$, implying in turn that $z_{k\mu}(t) \to +\infty$ as $t\to\infty$.
However, with $z_{k\mu} = \theta'(x_{k\mu}) - \theta'(x_{k,0})$, this is only possible if $x_{k\mu}(t)\to0$, a contradiction.

Assume now that $q = (\alpha_{1,0},\dotsc,\alpha_{N,0})$ is a strict Nash equilibrium of $\game$.
To show that $q$ is Lyapunov stable, it will be again convenient to work with the relative scores $z_{k\mu}$ and show that if $m\in\R$ is sufficiently negative, then every trajectory $z(t)$ that starts in the open set $U_{m} = \{z\in\prod_{k}\R^{\act_{k,0}}: z_{k\mu} < m\}$ always stays in $U_{m}$;
since $U_{m}$ maps via $\iota^{-1}\from\prod_{k}\R^{\act_{k,0}} \to \relint(\strat)$ to a neighborhood of $q$ in $\relint(\strat)$, this is easily seen to imply Lyapunov stability for $q$ in $\strat$.

In view of the above, pick $m\in\R$ so that $\Delta\pay_{k\mu}(x(z)) \leq -\eps<0$ for all $z\in U_{m}$ and let $\tau_{m} = \inf\{t: z(t)\notin U_{m}\}$ be the time it takes $z(t)$ to escape $U_{m}$.
Then, if $\tau_{m}$ is finite and $t\leq \tau_{m}$, the relative score dynamics \eqref{eq.ZD} readily yield
\begin{equation}
\label{eq.escape}
z_{k\mu}(t)
	= z_{k\mu}(0) + \int_{0}^{t} \Delta\pay_{k\mu}(\choice_{0}(z(s))) \dd s
	\leq z_{k\mu}(0) - \eps t
	< m
\quad
\text{for all $\mu\in\act_{k,0}$, $k\in\play$.}
\end{equation}
Thus, substituting $\tau_{m}$ for $t$ in (\ref{eq.escape}), we obtain a contradiction to the definition of $\tau_{m}$ and we conclude that $z(t)$ always stays in $U_{m}$ if $m$ is chosen negative enough \textendash\ i.e. $q$ is Lyapunov stable.

To show that $q$ is in addition attracting, it suffices to let $t\to\infty$ in \eqref{eq.escape}
and recall the definition \eqref{eq.zscore} of the $z_{k\mu}$ variables.
Finally, for the converse implication, assume that $q$ is not pure;
in particular, assume that $q$ lies in the relative interior of a non-singleton subface $\strat'$ spanned by $\supp(q)$.
Proposition \ref{prop.divergence} shows that $q$ cannot attract a relatively open neighborhood $U'$ of initial conditions in $\strat'$ because \eqref{eq.PD} remains volume-preserving when restricted to any subface $\strat'$ of $\strat$.
This implies that $q$ cannot be attracting in $\strat$, so $q$ cannot be asymptotically stable either.
\end{proof}

\smallskip

In conjunction with our previous results, Theorem \ref{thm.folk} provides an interesting insight into the role of the dynamics' discount rate $\temp$:
for small $\temp>0$, the dynamics \eqref{eq.PD} are attracted to the interior of $\strat$ and can only converge to points that are \emph{approximately} Nash;
on the other hand, for $\temp=0$, the solutions \eqref{eq.PD} are only attracted to strict Nash equilibria (see also Fig.~\ref{fig.portraits}).
As such, Theorem \ref{thm.folk} and Proposition \ref{prop.potential} suggest that if one seeks to reach a (pure) Nash equilibrium, the best convergence properties are provided by the ``no discounting'' case $\temp = 0$.
Nonetheless, as we shall see in the following section, if one seeks to implement the dynamics \eqref{eq.PD} as a bona fide learning algorithm in discrete time, the ``positive discounting'' regime $\temp>0$ is much more robust than the ``no disounting'' case \textendash\ all the while allowing players to converge arbitrarily close to a Nash equilibrium.

\subsection{The case $\temp<0$: reinforcing past observations}
\label{sec.negtemp}

In this section, we examine briefly what happens when players use a negative discount rate $\temp<0$, i.e. they reinforce past observations instead of discounting them.
As we shall see, even though the form of the dynamics \eqref{eq.PRL}/\eqref{eq.PD} remains the same (the derivation of \eqref{eq.PRL} and \eqref{eq.PD} does not depend on the sign of $\temp$), their properties are quite different in the regime $\temp<0$.

The first thing to note is that the definition of a \ac{QRE} also extends to negative rationality levels $\rlvl<0$ that describe an ``anti-rational'' behavior where players attempt to minimize their payoffs:
indeed, the \ac{QRE} of a game $\game \equiv \game(\play,\act,\pay)$ for negative $\rlvl$ are simply \ac{QRE} of the opposite game $-\game \equiv (\play,\act,-\pay)$, and as $\rlvl\to-\infty$, these equilibria approximate the Nash equilibria of $-\game$.

In this way, repeating the analysis of Section \ref{sec.stability}, we obtain:
\begin{theorem}
\label{thm.negtemp}
Let $\game\defeq\game(\play,\act,\pay)$ be a finite game and assume that each player $k\in\play$ is endowed with a decomposable penalty function $h_{k}\from\strat_{k}\to\R$ with induced choice map $\choice_{k}\from\R^{\act_{k}}\to\strat_{k}$.
Then, in the case of a negative discount rate $\temp<0$:
\smallskip
\begin{enumerate}
[\textup(1\textup)]
\item
The rest points of the penalty-regulated dynamics \eqref{eq.PD} are the restricted \ac{QRE} of the opposite game $-\game$.

\item
The dynamics \eqref{eq.PD} are expanding with respect to the volume form of Proposition \ref{prop.divergence} and \eqref{eq.volume} continues to hold.

\item
A strategy profile $q\in\strat$ is asymptotically stable if and only if it is pure \textup(i.e. a vertex of $\strat$\textup);
any other rest point of \eqref{eq.PD} is unstable.
\end{enumerate}
\end{theorem}

\begin{figure}
\centering
\subfigure{
\label{subfig.transition.postemp}
\includegraphics[width=170pt]{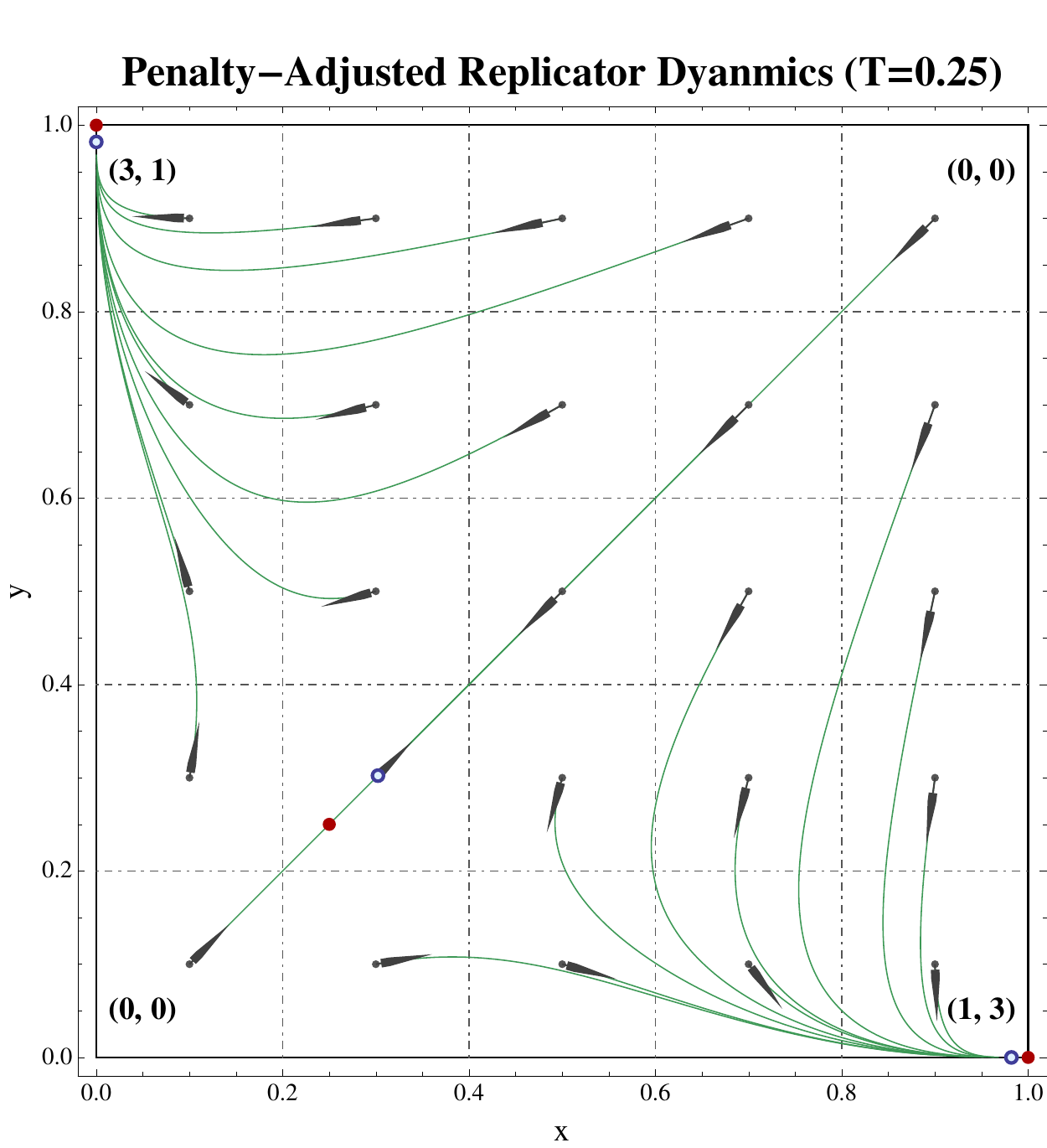}}
\hfill
\subfigure{
\label{subfig.transition.negtemp}
\includegraphics[width=170pt]{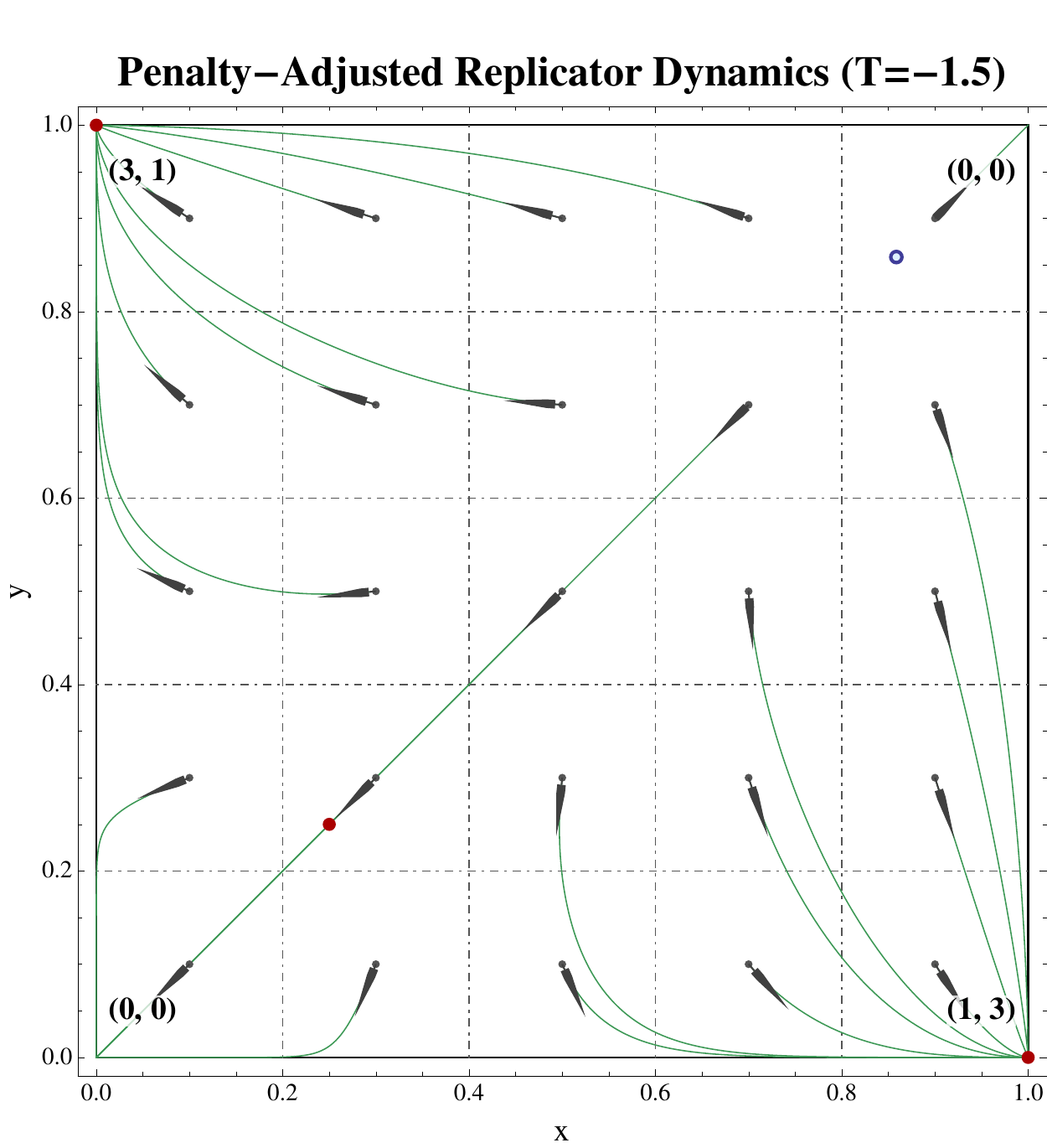}}
\caption{\footnotesize
Phase portraits of the penalty-adjusted replicator dynamics \eqref{eq.TRD} showing the transition from positive to negative discount rates in the game of Fig. \ref{fig.portraits}.
For small $\temp>0$, the rest points of \eqref{eq.PD} are $\choice$-approximate Nash equilibria (red dots), and they attract almost all interior solutions;
as $\temp$ drops to negative values, the non-equilibrium vertices of $\strat$ become asymptotically stable (but with a small basin of attraction), and each one gives birth to an unstable \ac{QRE} of the opposite game in a subcritical pitchfork bifurcation.
Of these two equilibria, the one closer to the game's interior Nash equilibrium is annihilated with the pre-existing \ac{QRE} at $T\approx-0.278$, and as $T\to-\infty$, we obtain a time-inverted image of the $T\to+\infty$ portrait with the only remaining \ac{QRE} repelling all trajectories towards the vertices of $\strat$;
Figure \ref{subfig.transition.negtemp} shows the case where only one (repelling) rest point remains.
}
\label{fig.transition}
\end{figure}

\smallskip

Theorem \ref{thm.negtemp} will be our main result for $\temp<0$, so some remarks in order:

\smallskip

\begin{remark}
The games $\game$ and $-\game$ have the same restricted equilibria, so the rest points of \eqref{eq.PD} for small $\temp>0$ (corresponding to \ac{QRE} with large $\rlvl = 1/\temp \to+\infty$) transition smoothly to perturbed equilibria with small $\temp<0$ ($\rlvl \to -\infty$) via the ``fully rational'' case $\temp = 0$ (which corresponds to the Nash equilibria of the game when $\rlvl = \pm\infty$).
In fact, by continuity, the phase portrait of the dynamics \eqref{eq.PD} for sufficiently small $\temp$ (positive or negative) will be broadly similar to the base case $\temp=0$ (at least, in the generic case where there are no payoff ties in $\game$).
The main difference between positive and negative discount rates is that, for small $\temp<0$, the orbits of \eqref{eq.PD} are attracted to the vertices of $\strat$ (though each individual vertex might have a vanishingly small basin of attraction), whereas for small $\temp>0$, the dynamics are only attracted to interior points (which, however, are arbitrarily close to the vertices of $\strat$).
\end{remark}

\smallskip

\begin{remark}
It should be noted that the expanding property of \eqref{eq.PD} for $\temp<0$ does not clash with the fact that the vertices of $\strat$ are asymptotically stable.
Indeed, as can be easily seen by Lemma \ref{lem.diffeo}, sets of unit volume become vanishingly small (in the Euclidean sense) near the boundary $\bd(\strat)$ of $\strat$;
as such, the expanding property of the dynamics \eqref{eq.PD} precludes the existence of attractors in the interior of $\strat$, but not of boundary attractors.%
\footnote{Obviously, the same applies to every subface $\strat'$ of $\strat$, explaining in this way why only the vertices of $\strat$ are attracting.}
\end{remark}

\smallskip

\begin{proof}{Proof of Theorem \ref{thm.negtemp}.}
The time inversion $t\mapsto-t$ in \eqref{eq.PD} is equivalent to the inversion $u\mapsto -u$, $\temp\mapsto-\temp$, so our first claim follows from the $\temp>0$ part of Proposition \ref{prop.restpoints};
likewise, our second claim is obtained by noting that the proof of Proposition \ref{prop.divergence} does not differentiate between positive and negative temperatures either.

For the last part, our proof will be based on the dynamics \eqref{eq.ZD};
more precisely, focus for convenience on the vertex $q = (\alpha_{1,0},\dotsc,\alpha_{N,0})$ of $\strat$, and let $\act_{k,0} = \act_{k}\exclude{\alpha_{k,0}}$ as usual.
Then, a simple integration of \eqref{eq.ZD} yields 
\begin{equation}
\label{eq.zscore.int}
z_{k\mu}(t)
	= z_{k\mu}(0) e^{-\temp t}
	+ \int_{0}^{t} e^{-\temp(t-s)} \Delta\pay_{k\mu}(x(s)) \dd s.
\end{equation}
However, given that $\Delta\pay_{k\mu}$ is bounded on $\strat$ (say by some $M>0$), the last integral will be bounded in absolute value by $M \abs{T}^{-1} \left(e^{\abs{\temp} t} - 1 \right)$, and hence:
\begin{equation}
\label{eq.zscore.bound}
	z_{k\mu}(t)
	\leq -M \abs{\temp}^{-1} + \left(z_{k\mu}(0) + M \abs{\temp}^{-1} \right) e^{\abs{\temp} t}.
\end{equation}
Thus, if we pick $z_{k\mu}(0) < -M \abs{\temp}^{-1}$, we will have $\lim_{t\to\infty} z_{k\mu}(t) = -\infty$ for all $\mu\in\act_{k,0}$, $k\in\play$, i.e. $x(t)\to q$.
Accordingly, given that the set $U_{\temp} = \{z\in\prod_{k}\R^{\act_{k,0}}: z_{k\mu} < - M \abs{\temp}^{-1}\}$ is just the image of a neighborhood of $q$ in $\relint(\strat)$ under the diffeomorphism of Lemma \ref{lem.diffeo}, $q$ will attract all nearby interior solutions of \eqref{eq.PD};
by restriction, this property applies to any subface of $\strat$ which contains $q$, so $q$ is attracting.
Finally, if $z_{k\mu}(0) < - M \abs{T}^{-1}$, we will also have $z_{k\mu}(t) < z_{k\mu}(0)$ for all $t\geq0$ (cf. the proof of Proposition \ref{prop.omega}), so $q$ is Lyapunov stable, and hence asymptotically stable as well.

Conversely, assume that $q\in\strat$ is a non-pure Lyapunov stable state;
then, by descending to a subface of $\strat$ if necessary, we may assume that $q$ is interior.
In that case, if $U$ is a neighborhood of $q$ in $\relint(\strat)$, Proposition \ref{prop.divergence} shows that any neighborhood $V$ of $q$ that is contained in $U$ will eventually grow to a volume larger than that of $U$ under \eqref{eq.PD}, so there is no open set of trajectories contained in $U$.
This shows that only vertices of $\strat$ can be stable, and our proof is complete.
\end{proof}

\section{Discrete-time learning algorithms}
\label{sec.stochastic}

In this section, we examine how the dynamics \eqref{eq.PRL} and \eqref{eq.PD} may be used for learning in finite games that are played repeatedly over time.
To that end, a first-order Euler discretization of the dynamics \eqref{eq.PRL} gives the recurrence
\begin{equation}
\label{eq.Euler}
\begin{aligned}
Y_{k\alpha}(n+1)
	&= Y_{k\alpha}(n)+ \step \left[\pay_{k\alpha}(X(n)) - \temp Y_{k\alpha}(n)\right],\\
X_{k}(n+1)
	&= \choice(Y_{k}(n+1)),
\end{aligned}
\end{equation}
which is well-known to track \eqref{eq.PRL} arbitrarily well over finite time horizons when the discretization step $\step$ is sufficiently small.
That said, in many practical scenarios, players cannot monitor the mixed strategies of their opponents, so \eqref{eq.Euler} cannot be updated directly.
As a result, in the absence of perfect monitoring (or a similar oracle-like device), any distributed discretization of the dynamics \eqref{eq.PRL}/\eqref{eq.PD} should involve only the players' observed payoffs and no other information.

In what follows (cf. Table \ref{tab.algorithms} for a summary), we will drop such information and coordination assumptions one by one:
in Algorithm \ref{algo.score}, players will only be assumed to possess a bounded, unbiased estimate of their actions' payoffs;
this assumption is then dropped in Algorithm \ref{algo.strategy} which only requires players to observe their in-game payoffs (or a perturbed version thereof);
finally, Algorithm \ref{algo.distrib} provides a decentralized variant of Algorithm \ref{algo.strategy} where players are no longer assumed to update their strategies in a synchronous way.

\begin{table}[t]
\begin{center}
\small
\renewcommand{\arraystretch}{1.6}
\begin{tabular}{cccc}
\hline
	&\sc Input
	&\sc Uncertainties
	&\sc Asynchronicities
	\\
\hline
\hline
\sc
Algorithm~\ref{algo.score}
	\hspace{1em}
	&payoff vector
	&\checkmark
	&no
	\\
\hline
\sc
Algorithm~\ref{algo.strategy}
	\hspace{1em}
	&in-game payoffs
	&\checkmark
	&no
	\\
\hline
\sc
Algorithm~\ref{algo.distrib}
	\hspace{1em}
	&in-game payoffs
	&\checkmark
	&\checkmark
	\\
\hline
\end{tabular}
\vspace{2ex}
\end{center}
\caption{%
\footnotesize
Summary of the information and coordination requirements of the learning algorithms of Section \ref{sec.stochastic}.}
\label{tab.algorithms}
\end{table}

\subsection{Stochastic approximation of continuous dynamics}

We begin by recalling a few general elements from the theory of stochastic approximation.
Following \citeor{Benaim99} and \citeor{Borkar08}, let $\set$ be a finite set, and let $Z(n)$, $n\in\N$, be a stochastic process in $\R^{\set}$ such that
\begin{equation}
\label{eq.SA.0}
Z(n+1)
	= Z(n) + \step_{n+1} U(n+1),
\end{equation}
where $\step_{n}$ is a sequence of step sizes and $U(n)$ is a stochastic process in $\R^{\set}$ adapted to the filtration $\filter$ of $Z$.
Then, given a (Lipschitz) continuous vector field $f\from\R^{\set}\to\R^{\set}$, we will say that \eqref{eq.SA.0} is a \emph{stochastic approximation} of the dynamical system
\begin{equation}
\label{eq.MD}
\tag{MD}
\dot z
	= f(z),
\end{equation}
if $\ex\left[U(n+1)\given\filter_{n}\right] = f(Z(n))$ for all $n$.
More explicitly, if we split the so-called \emph{innovation term} $U(n)$ of \eqref{eq.SA.0} into its average value $f(Z(n)) = \ex[U(n+1)|\filter_{n}]$ and a zero-mean noise term $V(n+1) = U(n+1) - f(Z(n))$, \eqref{eq.SA.0} takes the form
\begin{equation}
\label{eq.SA}
\tag{SA}
Z(n+1)
	= Z(n) + \step_{n+1} \left[ f(Z(n)) + V(n+1) \right],
\end{equation}
which is just a noisy Euler-like discretization of \eqref{eq.MD};
conversely, \eqref{eq.MD} will be referred to as the \emph{mean dynamics} of the stochastic recursion \eqref{eq.SA}.

The main goal of the theory of stochastic approximation is to relate the process \eqref{eq.SA} to the solution trajectories of the mean dynamics \eqref{eq.MD}.
Some standard assumptions that enable this comparison are:
\smallskip
\begin{enumerate}
[({A}1)]
\addtolength{\itemsep}{2pt}
\item
\label{itm.A1}
The step sequence $\step_{n}$ is $(\ell^{2} - \ell^{1})$\textendash summable, viz. $\insum_{n} \step_{n} =\infty$ and $\insum_{n} \step_{n}^{2} < \infty$.
\item
\label{itm.A2}
$V(n)$ is a martingale difference with $\sup_{n} \ex\left[\norm{V(n+1)}^{2} \given \filter_{n} \right] < \infty$.
\item
\label{itm.A3}
The stochastic process $Z(n)$ is bounded: $\sup_{n}\norm{Z(n)}< \infty$ (a.s.).
\end{enumerate}
\smallskip
Under these assumptions, the next lemma provides a sufficient condition which ensures that \eqref{eq.SA} converges to the set of stationary points of \eqref{eq.MD}:
\begin{lemma}
\label{lem.approx_convergence}
Assume that the dynamics \eqref{eq.MD} admit a strict Lyapunov function \textup(i.e. a real-valued function which decreases along non-stationary orbits of \eqref{eq.MD}\textup) such that the set of values taken by this function at the rest points of \eqref{eq.MD} has measure zero in $\R$.
Then, under Assumptions {\upshape(A1)\textendash(A3)} above, every limit point of the stochastic approximation process \eqref{eq.SA} belongs to a connected set of rest points of the mean dynamics \eqref{eq.MD}.
\end{lemma}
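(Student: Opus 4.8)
The plan is to invoke the ODE (mean-field) method of stochastic approximation and then combine it with the asymptotic analysis of internally chain transitive sets carrying a Lyapunov function; since assumptions $(A_{1})$--$(A_{3})$ are tailored precisely to this machinery, most of the work reduces to citing the right results from \citeor{Benaim99} and \citeor{Borkar08} in the correct order. First I would pass from the discrete recursion (\ref{eq.SA}) to its affine interpolation $\bar Z(t)$, obtained by placing $Z(n)$ at the rescaled time $\tau_{n} = \insum_{j=1}^{n} \step_{j}$ and interpolating linearly. The claim to establish is that $\bar Z$ is an \emph{asymptotic pseudotrajectory} of the semiflow generated by the mean dynamics (\ref{eq.MD}). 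Here $(A_{1})$ guarantees $\tau_{n}\to\infty$ while the individual steps vanish; $(A_{2})$ ensures, via the martingale convergence theorem applied to $M_{n} = \insum_{j\leq n}\step_{j} V(j)$ together with the square-summable half of $(A_{1})$, that the accumulated noise converges almost surely and hence perturbs the interpolated orbit negligibly over bounded time windows; and $(A_{3})$ confines $\bar Z$ to a compact set on which the Lipschitz field $f$ is well controlled. This is exactly the content of the standard APT theorems, which I would quote rather than reprove.

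Next I would identify the limit set. By the general theory of asymptotic pseudotrajectories (see \citeor{Benaim99}, Theorem 5.7), the limit set $L(\bar Z) = \intersect_{t\geq 0}\cl\big(\{\bar Z(s): s\geq t\}\big)$ of a precompact APT is a nonempty, compact, connected and \emph{internally chain transitive} invariant set of the flow of (\ref{eq.MD}). Since every accumulation point of the sequence $Z(n)$ lies in $L(\bar Z)$, it suffices to show that this internally chain transitive set is in fact contained in a connected set of rest points.

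This last implication is where the Lyapunov hypothesis and the measure-zero condition do their work. Writing $\Lambda = L(\bar Z)$, letting $V$ be the strict Lyapunov function of the statement, and letting $R$ denote the set of rest points of (\ref{eq.MD}), I would apply the dichotomy for internally chain transitive sets under a Lyapunov function (\citeor{Benaim99}, Proposition 6.4): if $V$ strictly decreases along every orbit outside $R$ and the set $V(R)$ of Lyapunov values at rest points has empty interior in $\R$, then $\Lambda\subseteq R$ and $V$ is constant on $\Lambda$. The assumed fact that $V(R)$ has measure zero immediately supplies the empty-interior condition, so $\Lambda$ is a connected subset of $R$; since the accumulation points of $Z(n)$ lie in $\Lambda$, the claim follows.

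The only genuinely hands-on step, as opposed to invocation of cited machinery, is the almost-sure negligibility of the noise, i.e. the convergence of the martingale $M_{n}$ and the resulting Gronwall-type control of $\bar Z$ against the true flow; everything else is a matter of stacking the APT theorems correctly. Conceptually, the crux to emphasize is that the measure-zero hypothesis on the Lyapunov values is exactly the ingredient that upgrades ``internally chain transitive'' to ``contained in a single connected component of the rest-point set,'' ruling out the recurrent behavior that an internally chain transitive set could otherwise exhibit.
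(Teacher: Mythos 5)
Your proposal is correct and follows essentially the same route as the paper: the paper's proof is precisely the citation chain Proposition~4.2 (martingale noise control), Proposition~4.1 (asymptotic pseudotrajectory), Theorem~5.7 (internally chain transitive limit set), and Proposition~6.4 (Lyapunov function with measure-zero/empty-interior value set forces the limit set into the rest points) of \citeor{Benaim99}, which is exactly the sequence of implications you spell out. Your version merely unpacks what each cited result contributes, including the correct observation that measure zero in $\R$ implies empty interior.
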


\begin{proof}{Proof.}
Our claim is a direct consequence of the following string of results in~\citeor{Benaim99}:
Prop.~4.2, Prop.~4.1, Theorem~5.7, and Prop.~6.4.
\end{proof}

As an immediate application of Lemma \ref{lem.approx_convergence}, let $\game$ be a finite game with potential $U$.
By Lemma \ref{lem.Lyapunov}, the function $F = \temp h - U$ is Lyapunov for \eqref{eq.PRL}/\eqref{eq.PD};
moreover, since $U$ is multilinear and $h$ is smooth and strictly convex, Sard's theorem (\citeor{Lee03}) ensures that the set of values taken by $F$ at its critical points has measure zero.
Thus,
any stochastic approximation of \eqref{eq.PRL}/\eqref{eq.PD} which satisfies Assumptions (A1)\textendash(A3) above can only converge to a connected set of restricted \ac{QRE}.

\subsection{Score-based learning}
\label{sec.score-based}

In this section, we present an algorithmic implementation of the score-based learning dynamics \eqref{eq.PRL} under two different information assumptions:
first, we will assume that players possess an unbiased estimate for the payoff of each of their actions (including those that they did not play at a given instance);
we will then drop this assumption and describe the issues that arise when players can only observe their in-game payoffs.

\subsubsection{Learning with imperfect payoff estimates.}

If the players can estimate the payoffs of actions that they did not play, the sequence of play will be as follows:
\smallskip
\begin{enumerate}
\addtolength{\itemsep}{2pt}
\item
At stage $n+1$, each player selects an action $\alpha_k(n+1)\in\act_{k}$ based on a mixed strategy $X_{k}(n) \in \strat_{k}$.

\item
Every player receives a bounded and unbiased estimate $\hat\pay_{k\alpha}(n+1)$ of his actions' payoffs, viz.
\begin{enumerate}
\item
$\ex\left[\hat\pay_{k\alpha}(n+1)\given\filter_{n}\right] = \pay_{k\alpha}(X(n))$,
\item
$\abs{\hat\pay_{k\alpha}(n+1)} \leq C$ (a.s.),
\end{enumerate}
where $\filter_{n}$ is the history of the process up to stage $n$ and $C>0$ is a constant.

%
%
\item
Players choose a mixed strategy $X_{k}(n+1)\in\strat_{k}$ and the process repeats.
\end{enumerate}

\smallskip

It should be noted here that players are not explicitly assumed to monitor their opponents' strategies, nor to communicate with each other in any way:
for instance, in congestion and resource allocation games, players can compute their out-of-game payoffs by probing the game's facilities for a broadcast.
That said, the specifics of how such estimates can be obtained will not concern us here:
in what follows, we only seek to examine how players can exploit such information when it is available.
To that end, the score-based learning process \eqref{eq.PRL} gives:

\begin{algorithm}[H]
\flushleft
\sf


$n \leftarrow 0$;

\ForEach
{player $k \in \play$}
{initialize $Y_{k}\in\R^{\act_{k}}$ and set $X_k \leftarrow\choice_k(Y_k)$;
\hfill
\# initialization}

\Repeat
{%
$n \leftarrow n+1$;
\\[2pt]
\lForEach{player $k \in \play$}
{simultaneously
\\[2pt]
select new action ${\alpha}_k\in\act_{k}$ according to mixed strategy $X_{k}$;
\hfill
\# choose action}
\\[2pt]
\ForEach{player $k \in \play$}
{%
\ForEach{action $\alpha\in \act_k$}
{%
observe $\hat\pay_{k\alpha}$;
\hfill
\# estimate payoff of each action
\\[2pt]
$Y_{k\alpha} \leftarrow Y_{k\alpha} + \step_{n} (\hat\pay_{k\alpha} - \temp Y_{k\alpha} )$;
\hfill
\# update  score of each action
} 
$X_{k} \leftarrow \choice_{k}(Y_{k})$;
\hfill
\# update mixed strategy
} 
until termination criterion is reached%
}
\normalfont
\caption{Score-based learning with imperfect payoff monitoring}
\label{algo.score}
\end{algorithm}

To study the convergence properties of Algorithm \ref{algo.score}, let $Y_{k}(n)$ denote the score vector of player $k$ at the $n$-th iteration of the algorithm \textendash\ and likewise for the player's mixed strategy $X_{k}(n)\in\strat_{k}$, chosen action $\alpha_{k}(n)\in\act_{k}$ and payoff estimates $\hat\pay_{k\alpha}(n)\in\R$.
Then, for all $k\in\play$ and $\alpha\in\act_{k}$, we get:
\begin{flalign}
\label{eq.score-mean1}
\ex\left[ (Y_{k\alpha}(n+1)-Y_{k\alpha}(n))/\step_{n+1} \given \filter_{n} \right]
	&= \ex\left[\hat\pay_{k\alpha}(n+1) \given \filter_{n}\right] - \temp Y_{k\alpha}(n)
	\notag\\
	& = \pay_{k\alpha}(X(n)) - \temp  Y_{k\alpha}(n).
\end{flalign}
Together with the choice rule $X_k(n) = \choice_{k}(Y_k(n))$, the RHS of \eqref{eq.score-mean1} yields the score dynamics \eqref{eq.PRL}, so the process $X(n)$ generated by Algorithm \ref{algo.score} is a stochastic approximation of \eqref{eq.PRL}.
We thus get:

\begin{theorem}
\label{th.local}
Let $\game$ be a potential game.
If the step size sequence $\step_{n}$ satisfies {\upshape(A1)} and the players' payoff estimates $\hat\pay_{k\alpha}$ are bounded and unbiased,
Algorithm \ref{algo.score} converges \textup(a.s.\textup) to a connected set of QRE of $\game$ with rationality parameter $\rlvl=1/\temp$.
In particular, $X(n)$ converges within $\eps(\temp)$ of a Nash equilibrium of $\game$ and the error $\eps(\temp)$ vanishes as $\temp\to0$.
\end{theorem}

\begin{proof}{Proof.}
In view of the discussion following Lemma \ref{lem.approx_convergence}, we will establish our claim by showing that Assumptions {\upshape(A1)\textendash(A3)} are all satisfied in the case of the stochastic approximation
\begin{equation}
\label{eq.PRL-SA}
Y_{k\alpha}(n+1)
	= Y_{k\alpha}(n) + \step_{n+1} \left[\hat\pay_{k\alpha}(n+1) - \temp Y_{k\alpha}(n)\right].
\end{equation}
Assumption {\upshape(A1)} is true by design, so there is nothing to show.
Furthermore, expressing the noise term of \eqref{eq.PRL-SA} as $V_{k\alpha}(n+1) = \hat\pay_{k\alpha}(n+1) - \pay_{k\alpha}(X(n))$, we readily obtain $\ex\left[V_{k\alpha}(n+1)\given\filter_{n}\right] = 0$ and $\ex\left[V_{k\alpha}^{2}(n+1)\given\filter_{n}\right] \leq 2C^{2}$, so Assumption (A2) also holds.
Finally, with $\hat\pay_{k\alpha}$ bounded (a.s.), $Y_{k\alpha}$ will also be bounded (a.s.):
indeed, note first that $0 \leq 1 - \temp\step_{n} \leq 1$ for all $n$ larger than some $n_{0}$;
then, using the uniform norm for convenience of notation, the iterates of \eqref{eq.PRL-SA} will satisfy $\norm{Y(n+1)} \leq (1 - \temp\step_{n+1}) \norm{Y(n)} + \step_{n+1} C$ for all sufficiently large $n$.
Hence:
\begin{enumerate}
\item
If  $\norm{Y(n)} \leq  C/\temp$, we will also have $\norm{Y(n+1)} \leq (1 - \temp \step_{n+1}) C/\temp  + \step_{n+1} C =  C/\temp$.
\item 
If $\norm{Y(n)} >  C/\temp$, we will have $\norm{Y(n+1)} \leq \norm{Y(n)} - \temp\step_{n+1}\norm{Y(n)} + \step_{n+1} C \leq \norm{Y(n)}$, i.e. $\norm{Y}$ decreases.
\end{enumerate}
The above shows that $\norm{Y(n)}$ is bounded by $\temp^{-1}C \vee \max_{n\geq n_{0}} \norm{Y(n)}$, so Assumption (A3) also holds.
By Proposition \ref{prop.restpoints} and the discussion following Lemma \ref{lem.approx_convergence}, we then conclude that $X(n)$ converges to a connected set of \emph{restricted} \ac{QRE} of $\game$.
However, since $Y(n)$ is bounded, $X(n)$ will be bounded away from the boundary $\bd(\strat)$ of $\strat$ because the image of a compact set under $\choice$ is itself compact in $\relint(\strat)$.
As such, any limit point of $X(n)$ will be interior and our claim follows.
\end{proof}

\subsubsection{The issue with in-game observations.}
\label{sec.ingame}

Assume now that the only information at the players' disposal is the payoff of their chosen actions, possibly perturbed by some random noise process.
Formally, if $\alpha_{k}(n+1)$ denotes the action of player $k$ at the $(n+1)$-th stage of the process, we will assume that the corresponding observed payoff is of the form
\begin{equation}
\label{eq.payoff-perturbed}
\hat\pay_{k}(n+1)
	= \pay_{k}(\alpha_{1}(n+1),\dotsc,\alpha_{N}(n+1))
	+ \xi_{k}(n+1),
\end{equation}
where the noise process $\xi_{k}$ is a bounded, $\filter$-adapted martingale difference (i.e. $\ex\left[\xi_{k}(n+1)\given \filter_{n}\right] = 0$ and $\norm{\xi_{k}}\leq C$ for some $C>0$) with $\xi_{k}(n+1)$ independent of $\alpha_{k}(n+1)$.%
\footnote{These assumptions are rather mild and can be easily justifed by invoking the independence between the nature-driven perturbations to the players' payoffs and the sampling done by each player to select an action at each stage.
In fact, this accounts not only for i.i.d. perturbations (a case which has attracted significant interest in the literature by itself), but also for scenarios where the noise at stage $n+1$ depends on the entire history of play up to stage $n$.}

In this context, players only possess information regarding the actions that they actually played.
Thus, motivated by the $Q$-learning scheme \eqref{eq.Qlearning}, we will use the unbiased estimator
\begin{equation}
\label{eq.unbiased}
\hat\pay_{k\alpha}(n+1)
	= \hat\pay_{k}(n+1)
	\cdot \frac{\one(\alpha_{k}(n+1) = \alpha)}{\prob\left(\alpha_{k}(n+1) =\alpha \given \filter_{n} \right)}
	= \one(\alpha_{k}(n+1)=\alpha) \cdot \frac{\hat\pay_{k}(n+1)}{X_{k\alpha}(n)}
\end{equation}
which allows us to replace the inner action-sweeping loop of Algorithm \ref{algo.score} with the update step:

\begin{algorithm}[H]
\flushleft
\sf

\lForEach{player $k \in \play$}
{simultaneously
\\[2pt]
select new action ${\alpha}_k\in\act_{k}$ according to mixed strategy $X_{k}$;
\hfill
\# choose action
\\[2pt]
observe $\hat\pay_{k}$;
\hfill
\# receive realized payoff
\\[2pt]
$Y_{k\alpha_{k}} \leftarrow Y_{k\alpha_{k}} + \step_{n} (\hat\pay_{k} - \temp Y_{k\alpha_{k}} ) / X_{k\alpha_{k}}$;
\hfill
\# update current action score
\\[2pt]
$X_{k}\leftarrow \choice(Y_{k})$;
\hfill
\# update mixed strategy}%
\label{algo.score_update}
\end{algorithm}

As before, $Y(n)$ is an $\filter$-adapted process with
\begin{flalign}
\label{eq.score-mean2}
\ex\big[ (Y_{k\alpha}(n+1)	&- Y_{k\alpha}(n))/\step_{n+1} \big\vert\filter_{n} \big]  
	\notag\\
	&= \ex\left[ \one(\alpha_{k}(n+1) = \alpha) \cdot \frac{\hat\pay_{k}(n+1)}{X_{k\alpha}(n)} \given \filter_{n} \right]
	- \temp Y_{k\alpha}(n)
	\notag\\[1ex]
	& = \pay_{k\alpha}(X(n)) - \temp  Y_{k\alpha}(n),
\end{flalign}
where the last line follows from the
assumptions on $\hat\pay_{k}$ and $\xi_{k}$.

The mean dynamics of \eqref{eq.score-mean2} are still given by \eqref{eq.PRL} so the resulting algorithm boils down to the $Q$-learning scheme of \citeor{LC05}.
This scheme was shown to converge to a \ac{QRE} (or \emph{Nash distribution}) in several classes of $2$-player games under the assumption that $Y(n)$ remains bounded, but since the probabilities $X_{k\alpha}(n)$ can become arbitrarily small, this assumption is hard to verify \textendash\ so the convergence of this variant of Alg.~\ref{algo.score} with in-game observations cannot be guaranteed either.

One possible way of overcoming the unboundedness of $Y(n)$ would be to truncate the innovation term of \eqref{eq.PRL-SA} with a sequence of expanding bounds as in \citeor{Sha11};
ultimately however, the required summability conditions amount to showing that the estimator \eqref{eq.unbiased} is itself bounded, so the original difficulty remains.%
\footnote{Note that this is also true for the weaker requirement of \citeor{Borkar08}, namely that the innovation term of \eqref{eq.PRL-SA} is bounded in $L^{2}$ by $K(1+\norm{Y_{n}}^{2})$ for some positive $K>0$.}
Thus, instead of trying to show that $Y(n)$ tracks \eqref{eq.PRL}, we will focus in what follows on the strategy-based variant \eqref{eq.PD} \textendash\ which is equivalent to \eqref{eq.PRL} in continuous time \textendash\ and implement it directly as a payoff-based learning process in discrete time.


\subsection{Strategy-based learning}
\label{sec.strategy-based}

In this section, we will derive an algorithmic implementation of the penalty-regulated dynamics \eqref{eq.PD} which only requires players to observe their in-game payoffs \textendash\ or a perturbed version thereof.
One advantage of using \eqref{eq.PD} as a starting point is that it does not require a closed form expression for the choice map $\choice$ (which is hard to obtain for non-logit action selection);
another is that since the algorithm is strategy-based (and hence its update variables are bounded by default), we will not need to worry too much about satisfying conditions (A2) and (A3) as in the case of Algorithm \ref{algo.score}.

With all this in mind, we obtain the following strategy-based algorithm:

\begin{algorithm}[H]
\flushleft
\sf

Parameters:
$\temp>0$, $\theta_{k}$, $\step_{n}$

$n \leftarrow 0$;

\ForEach
{player $k \in \play$}
{initialize $X_{k}\in\relint(\strat_{k})$ as a mixed strategy with full support;
\hfill
\# initialization}


\Repeat
{$n \leftarrow n+1$;
\\[2pt]
\lForEach{player $k \in \play$}
{simultaneously
\\[2pt]
select new action ${\alpha}_k\in\act_{k}$ according to mixed strategy $X_{k}$;
\hfill
\# choose action}
\\[2pt]
observe $\hat\pay_{k}$;
\hfill
\# receive realized payoff
\\[2pt]
\ForEach
{action $\alpha \in \act_{k}$}
{%
\small
\begin{equation*}
X_{k\alpha}
	\leftarrow X_{k\alpha}
	+ \frac{\step_{n}}{\theta_{k}''(X_{k\alpha})}
	\left[
	\frac{\hat\pay_{k}}{X_{k{\alpha}_k}}
	\left(
	\one({\alpha}_k = \alpha)- \frac{\Theta_{k}''(X_k)}{\theta_{k}''(X_{k{\alpha}_k})}
	\right)
	-\temp g_{k\alpha}(X)
	\right]
\end{equation*}
where
$g_{k\alpha}(x) \equiv \theta_{k}'(x_{k\alpha}) - \Theta_{k}''(x_k) \insum_{\beta}^k \theta_{k}'(X_{k\beta})\big/\theta_{k}''(X_{k\beta})$;
\hfill
\# update mixed strategy
} 
until termination criterion is reached
}\; 
\normalfont
\caption{Strategy-based learning with in-game payoff observations}
\label{algo.strategy}
\end{algorithm}

\setcounter{remark}{0}

\begin{remark}
As a specific example, the Gibbs kernel $\theta(x) = x \log x$ leads to the update rule:

\begin{equation}
\label{eq.PD-SA-Gibbs}
\txs
X_{k\alpha}
	\leftarrow X_{k\alpha}
	+ \step_{n} \left[
	\big(\one({\alpha}_k = \alpha)- X_{k\alpha}\big) \cdot \hat \pay_{k}
	-\temp X_{k\alpha} \left( \log X_{k\alpha} - \insum_{\beta}^{k} X_{k\beta} \log X_{k\beta} \right) 
	\right].
\end{equation}
Thus, for $\temp=0$, we obtain the reinforcement learning scheme of \citeor{SPT94} based on the classical replicator equation \eqref{eq.RD}.
\end{remark}

The strategy update step of Algorithm \ref{algo.strategy} has been designed to track the dynamics \eqref{eq.PD};
indeed,
for all $k\in\play$ and for all $\alpha\in\act_{k}$, we will have
\begin{flalign}
\label{eq.PD.mean}
\ex&\left[(X_{k\alpha}(n+1) - X_{k\alpha}(n))/\step_{n+1} \given \filter_{n}\right]
	\notag\\[.5ex]
	&= \frac{1}{\theta_{k}''(X_{k\alpha}(n))}
	\left[
	\pay_{k\alpha}(X(n)) \left(1 - \frac{\Theta_{k}''(X_k(n))}{\theta_{k}''(X_{k\alpha}(n))}\right)
	- \insum_{\beta\neq \alpha}^{k} \pay_{k\beta}(X(n)) \frac{\Theta_{k}''(X_k(n))}{\theta_{k}''(X_{k\beta}(n))}
	\right]
	\notag\\
	&-  \frac{\temp g_{k\alpha}(X(n))}{\theta_{k}''(X_{k\alpha}(n))},
\end{flalign}
which is simply the RHS of \eqref{eq.PD} evaluated at $X(n)$.
On the other hand, unlike Algorithm \ref{algo.score} (which evolves in $\prod_{k}\R^{\act_{k}}$), Algorithm \ref{algo.strategy} is well-defined only if the iterates $X_{k}(n)$ are admissible mixed strategies with full support at each update step.

To check that this is indeed the case, note first that the second term of the strategy update step of Algorithm \ref{algo.strategy} vanishes when summed over $\alpha\in\act_{k}$ so $\insum_{\alpha}^{k} X_{k\alpha}(n)$ will always be equal to $1$ (recall that $X_{k}(0)$ is initialized as a valid probability distribution);
as a result, it suffices to show that $X_{k\alpha}(n)>0$ for all $\alpha\in\act_{k}$.
Normalizing the game's payoffs to $[0,1]$ for simplicity,
the next lemma shows that the iterates of Alg.~\ref{algo.strategy} for $\temp>0$ remain a bounded distance away from the boundary $\bd(\strat)$ of $\strat$:

\begin{lemma}
\label{lem.step}
Let $\theta$ be a penalty function \textup(cf. Definition \ref{def.choice}\textup) with $x\theta''(x) \geq m > 0$ for all $x>0$.
Then,
for normalized payoff observations $\hat\pay_{k}\in[0,1]$ and $\temp>0$,
there exists a positive constant $K>0$ \textup(depending only on $\temp$ and $\theta$\textup) such that
the iterates $X_{k\alpha}(n)$ of Algorithm \ref{algo.strategy} remain bounded away from $0$ whenever the step sequence $\step_{n}$ is bounded from above by $K$.
\end{lemma}

\begin{proof}{Proof.}
We begin with some simple facts for $\theta$ (for simplicity, we will drop the player index $k\in\play$ in what follows):
\begin{itemize}
\addtolength{\itemsep}{2pt}
\item
$\theta'$ is strictly increasing.

\item
There exists some $M>0$ such that $|\theta'(\xi)/\theta''(\xi)|<M$ for all $\xi\in(0,1)$.

\item
For all $x\in\strat$, $\sum_{\beta} 1/\theta''(x_{\beta}) \geq \max_{\beta} 1/\theta''(x_\beta)$, so $\Theta''(x) \leq \min_{\beta} \theta''(x_\beta) \leq \max \{\theta''(\xi): \card(\act)^{-1} \leq \xi \leq 1\}$.
In particular, there exists some $\Theta_{\max}''$ such that $0 < \Theta''(x) \leq \Theta_{\max}''$ for all $x\in\strat$.
\end{itemize}

Now, letting $\hat{\alpha}$ be the chosen action at step $n+1$ and writing $\hat\pay$ for the corresponding observed payoff, we will have:
\begin{flalign}
\frac{1}{\theta''(x_{\alpha})}
	\bigg[
	\temp g_{\alpha}(x)
	&- \frac{\hat\pay}{x_{\hat\alpha}}
	\big(\one(\hat\alpha = \alpha) - \Theta_{h}''(x)/\theta''(x_{\hat{\alpha}})\big)
	\bigg]
	\notag\\[2pt]
	&\leq \frac{1}{\theta''(x_{\alpha})}
	\left[
	\temp g_{\alpha}(x) 
	+ \frac{\hat{\pay}\Theta''(x)}{x_{\hat\alpha} \theta''(x_{\hat{\alpha}})}
	\right]\notag\\[2pt]
	&\leq \frac{1}{\theta''(x_{\alpha})}
	\left[
	\temp \left(\theta'(x_{\alpha}) - \Theta''(x) \txs\insum_{\beta}^{k} \theta'(x_{\beta})\big/\theta''(x_{\beta})\right)
	+ m^{-1} \Theta''(x)
	\right]\notag\\[2pt]
	&\leq  \frac{1}{\theta''(x_{\alpha})}
        \left[\temp \theta'(x_{\alpha}) + \Theta_{\max}''\left(m^{-1} + \card(\act) M\temp \right) \right],\notag
\end{flalign}
where we used the normalization $\hat\pay\in[0,1]$ in the first two lines.
We thus get 
\begin{equation}
\label{eq.iterate1}
 X_\alpha(n+1)
 	\geq X_{\alpha}(n) - \step_{n+1} X_{\alpha}(n) \frac{c_{1}\theta'(X_{\alpha}(n) ) + c_{2}}{\theta''(X_{\alpha}(n))},
\end{equation}
where $c_1$ and $c_2$ are positive constants.

Since $\theta'$ is strictly increasing, we will have $c_{1} \theta'(x) + c_{2} < 0$ if and only if $x < \psi_{0}$ for some fixed $\psi_{0}\in(0,1)$ which depends only on $\temp$ and $\theta$.
As such, if $X_{\alpha}(n) \leq \psi_{0}$, \eqref{eq.iterate1} gives $X_\alpha(n+1) > X_\alpha(n)$;
on the other hand, if $X_{\alpha}(n) \geq \psi_{0}$, then,
the coefficient of $\step_{n+1} X_{\alpha}(n)$ in \eqref{eq.iterate1} will be bounded from above by some positive constant $c>0$ (recall that $x\theta''(x) \geq m >0$ for all $x>0$ and $\lim_{x\to0^{+}} \theta'(x)/\theta''(x) = 0$).
Therefore, if we take $K \equiv 1/(2c)$ and $\step_{n+1} \leq K$ for all $n\geq0$, we readily obtain:
\begin{equation}
\txs
X_{\alpha}(n+1)
	\geq   X_\alpha(n)- c \step_{n+1} X_\alpha(n)
	\geq \frac{1}{2} X_\alpha(n)
	\geq \frac{1}{2}\psi_{0},
\end{equation}
and hence:
\begin{equation}
X_{\alpha}(n+1)
	\geq
	\begin{cases}
	X_\alpha(n)
	&\text{if $X_{\alpha}(n)\leq \psi_{0}$,}
	\\
	\frac{1}{2}\psi_{0}
	&\text{if $X_\alpha(n)\geq \psi_{0}$.}
	\end{cases}
\end{equation}
We thus conclude that $X_{\alpha}(n) \geq \epsilon \equiv \min\{X_{\alpha}(1), \frac{1}{2} \psi_{0} \} > 0$, and our proof is complete.
\end{proof}

Under the assumptions of Lemma \ref{lem.step} above, Algorithm \ref{algo.strategy} remains well-de\-fi\-ned for all $n\geq0$ and the players' action choice probabilities never become arbitrarily small.%
\footnote{In practice, it might not always be possible to obtain an absolute bound on the observed payoffs of the game (realized, estimated or otherwise).
In that case, Lemma \ref{lem.step} cannot be applied directly, but Algorithm \ref{algo.strategy} can adapt dynamically to the magnitude of the game's payoffs by artificially projectings its iterates away from the boundary of the simplex \textendash\ for a detailed account of this technique, see e.g. pp.~115\textendash116 in \citeor{Les04}.}
With this in mind, the following theorem shows that Algorithm \ref{algo.strategy} converges to a connected set of \ac{QRE} in potential games:

\begin{theorem}
\label{thm.algo.convergence}
Let $\game$ be a potential game and let $\theta$ be a penalty function with $x\theta''(x) \geq m$ for some $m>0$.
If the step size sequence $\step_{n}$ of Algorithm \ref{algo.strategy} satisfies {\upshape(A1)} and the players' observed payoffs $\hat\pay_{k}$ are of the form \eqref{eq.payoff-perturbed}, Algorithm \ref{algo.strategy} converges \textup(a.s.\textup) to a connected set of QRE of $\game$ with rationality level $\rlvl=1/\temp$.
\end{theorem}

\begin{corollary}
With the same assumptions as above, Algorithm \ref{algo.strategy} with Gibbs updating given by \eqref{eq.PD-SA-Gibbs} converges within $\eps(\temp)$ of a Nash equilibrium;
furthermore, the error $\eps(\temp)$ vanishes as $\temp\to0$.
\end{corollary}

\begin{proof}{Proof of Theorem \ref{thm.algo.convergence}.}
Thanks to Lemma \ref{lem.step}, Assumptions (A2) and (A3) for the iterates of Algorithm \ref{algo.strategy} are verified immediately \textendash\ simply note that the innovation term of the strategy update step is bounded by the constant $K$ of Lemma \ref{lem.step}.
Thus, by Lemma \ref{lem.approx_convergence} and the subsequent discussion, $X(n)$ will converge to a connected set of \emph{restricted} \ac{QRE} of $\game$.
On the other hand, by Lemma \ref{lem.step}, the algorithm's iterates will always lie in a compact set contained in the relative interior of $\strat$, so any limit point of the algorithm will also be interior and our assertion follows.
\end{proof}

\begin{remark}
Importantly, Theorem \ref{thm.algo.convergence} holds for any $\temp>0$, so Algorithm \ref{algo.strategy} can be tuned to converge arbitrarily close to the game's Nash equilibria (see also the discussion following Theorem \ref{thm.folk} in Section \ref{sec.deterministic}).
In this way, Theorem \ref{thm.algo.convergence} is different in scope than the convergence results of \citeor{CMS10} and \citeor{Bravo11}:
instead of taking high $\temp>0$ to guarantee a unique \ac{QRE}, players converge arbitrarily close to a Nash equilibrium by taking small $\temp>0$.
\end{remark}

\begin{remark}
In view of the above, one might hope that Algorithm \ref{algo.strategy} converges to the game's (strict) Nash equilibria for $\temp=0$.
Unfortunately however, even in the simplest possible case of a single player game with two actions, \citeor{lamberton2004can} showed that the replicator update model \eqref{eq.PD-SA-Gibbs} with $\temp=0$ and step sizes of the form $\step_{n} = 1/n^{r}$, $0<r<1$, converges with positive probability to the game's globally suboptimal state.
\end{remark}

\begin{figure}
\centering
\subfigure[Initial distribution of strategies ($n=0$).]{
\includegraphics[width=170pt]{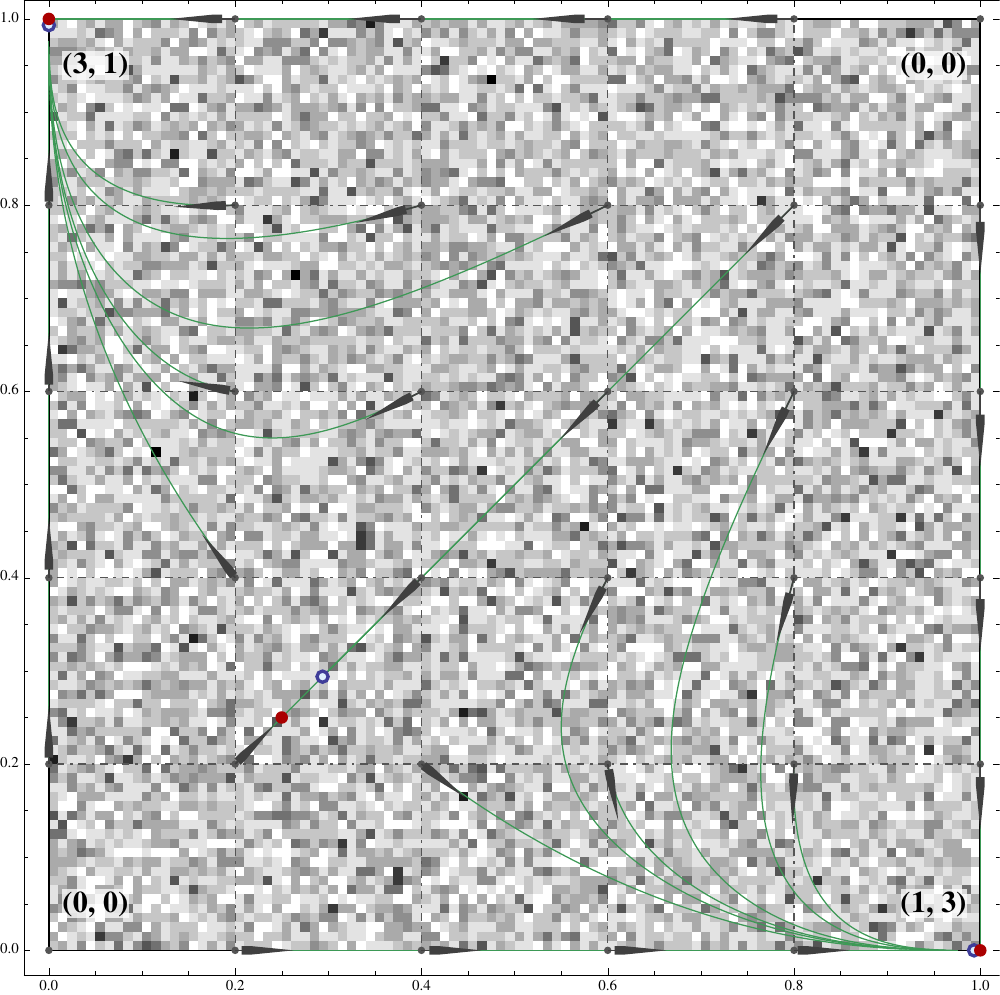}}
\hfill
\subfigure[Distribution after $n=2$ iterations.]{
\includegraphics[width=170pt]{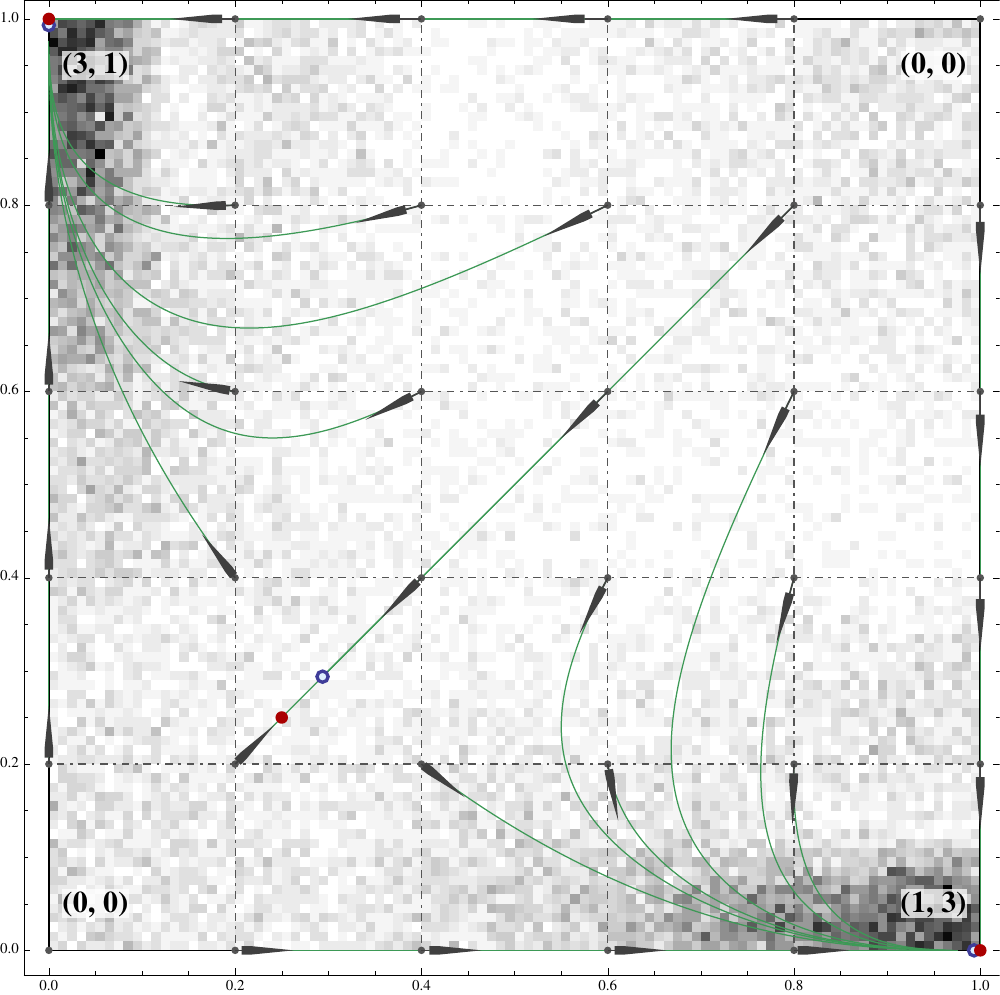}} 
\hfill

\vspace{5pt}
\subfigure[Distribution after $n=5$ iterations.]{
\includegraphics[width=170pt]{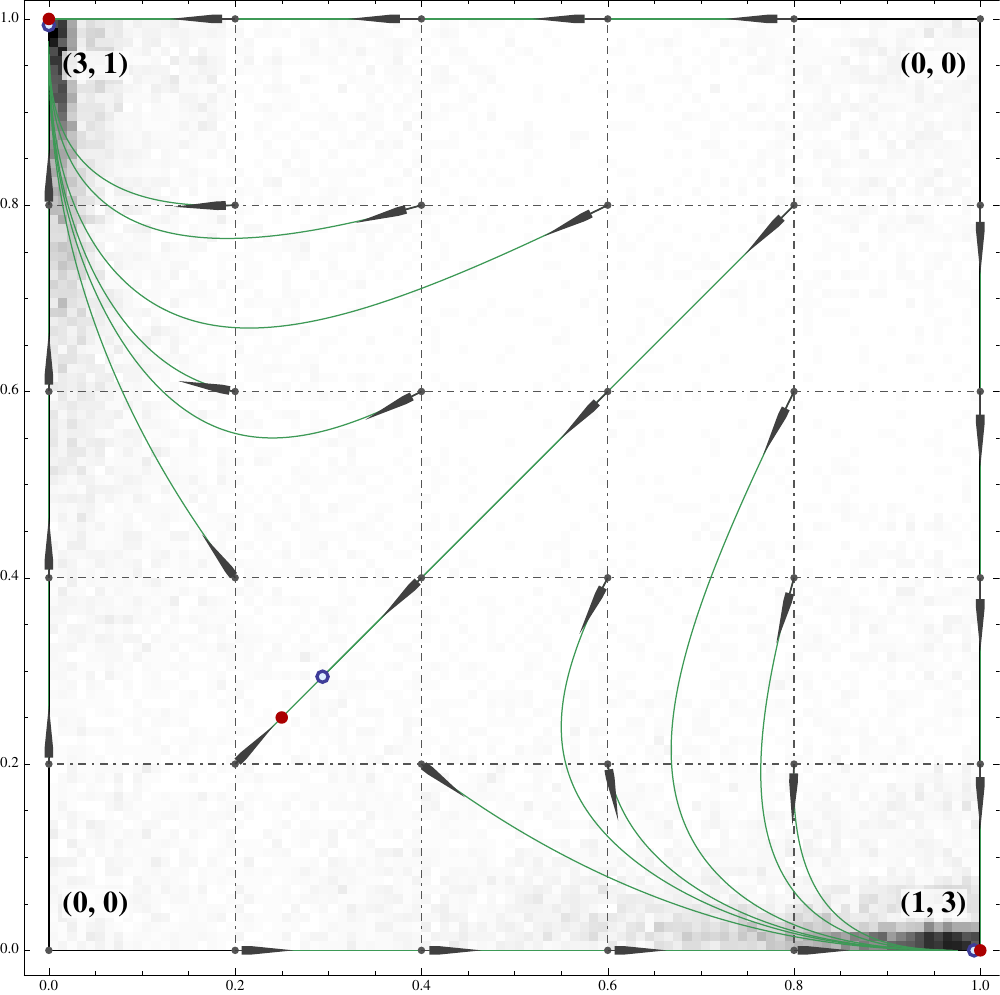}}
\hfill
\subfigure[Distribution after $n=10$ iterations.]{
\includegraphics[width=170pt]{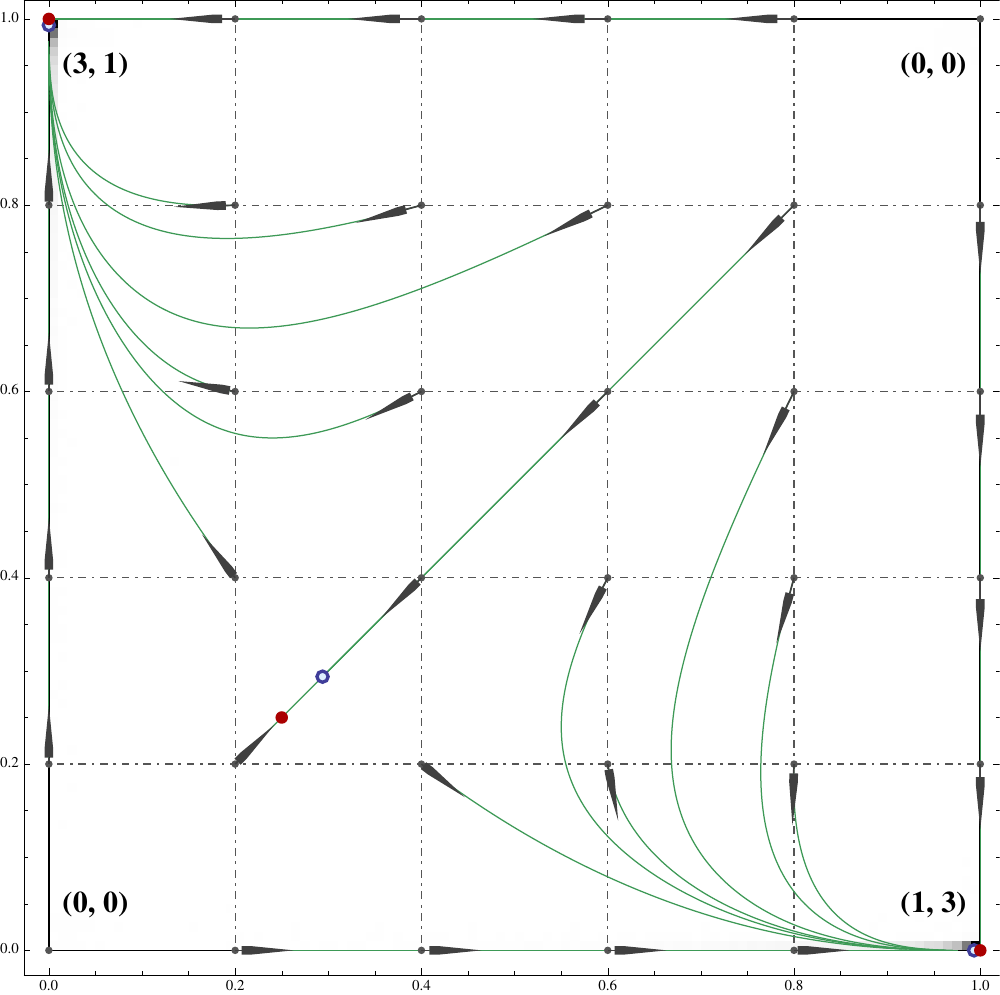}}
\hfill
\caption{
\footnotesize
Snapshots of the evolution of of Algorithm \ref{algo.strategy}.
In our simulations, we drew $10^{4}$ random initial strategies in the potential game of Fig.~\ref{fig.portraits} and, for each strategy allocation, we ran the Gibbs variant of Algorithm \ref{algo.strategy} with discount rate $\temp = 0.2$ and step sequence $\step_{n} = 1/(5+n^{0.6})$.
In each figure, the shades of gray represent the normalized density of states at each point of the game's strategy space;
we also drew the phase portraits of the underlying mean dynamics \eqref{eq.PD} for convenience.
We see that Algorithm \ref{algo.strategy} converges to the game's \ac{QRE} (which, for $\temp=1/\rlvl=0.2$ are very close to the game's strict equilibria) very fast:
after only $n=10$ iterations, more than $99\%$ of the initial strategies have converged within $\eps=10^{-3}$ of the game's equilibria.}
\label{fig.stochastic}
\end{figure}

\subsection{The robustness of strategy-based learning}
\label{sec.robustness}

Even though Algorithm \ref{algo.strategy} only requires players to observe and record their in-game payoffs, it still hinges on the following assumptions:
\smallskip
\begin{enumerate}
\item
Players all update their strategies at the same time.
\item
There is no delay between playing and receiving payoffs.
\end{enumerate}
\smallskip
Albeit relatively mild, these assumptions are often violated in practical scenarios:
for instance, the decision of a wireless user to transmit or remain silent in a slotted ALOHA network is not synchronized between users, so updates and strategy revisions occur at different periods for each user \textendash\ for a more detailed discussion, see e.g. \citeor{AltmanSurvey}.
Furthermore, in the same scenario, message propagation delays often mean that the outcome of a user's choice does not depend on the choices of other users at the current timeslot, but on their previous choices.

In view of all this, we will devote the rest of this section to examining the robustness of Algorithm \ref{algo.strategy} in this more general, asynchronous setting.
To that end, let $R_{n} \subseteq 2^\play$ be the random set of players who update their strategies at the $n$-th iteration of the algorithm.
Of course, since players are not aware of the global iteration counter $n$, they will only know the number of updates that they have carried out up to time $n$, as measured by the random variables $\phi_{k}(n) \defeq \card\{m\leq n: k \in R_{m}\}$, $k\in\play$.
Accordingly, the asynchronous variant of Algorithm \ref{algo.strategy} that we will consider consists of replacing the instruction ``for each player $k \in \play$'' by ``for each player $k \in R_n$'' and replacing ``$n$'' by ``$\phi_{k}(n)$'' in the step-size computation.

Another natural extension of Algorithm \ref{algo.strategy} consists of allowing the realized payoffs perceived by the players to be subject to delays (as well as stochastic perturbations).
Formally, let $d_{j,k}(n)$ denote the (integer-valued) lag between player $j$ and player $k$ when $k$ plays at stage $n$.
Then, the observed payoff $\hat\pay_{k}(n+1)$ of player $k$ at stage $n+1$ will depend on his opponents' past actions, and we will assume that
\begin{equation}
\label{eq.payoff-observed-delays}
\ex\left[\hat\pay_{k}(n+1) \given \filter_{n}\right]
	=  \pay_{k} \left(
	X_{1}(n -d_{1,k}(n)),\dotsc, X_{k}(n),\dotsc,  X_{N}(n-d_{N,k}(n))
	\right).
\end{equation}
For instance, if the payoff that player $k$ observes at stage $n$ is of the form
\begin{equation}
\hat\pay_{k}(n)
	= \pay_{k}(\alpha_{1}(n- d_{1,k}(n)),\dotsc,\alpha_{k}(n),\dotsc,\alpha_{N}(n- d_{N,k}(n))) + \xi_{k}(n),
\end{equation}
where $\xi_{k}(n)$ is a zero-mean perturbation process as in \eqref{eq.payoff-perturbed}, it is easy to check that the more general condition \eqref{eq.payoff-observed-delays} also holds.

In light of the above, we may derive a decentralized variant of Algorithm \ref{algo.strategy} as follows:
first, assume that each player $k\in\play$ is equipped with a discrete event timer $\tau_{k}(n)$, $n\in\N$, representing the times at which player $k$ wishes to update his strategies;
assume further that $n/\tau_{k}(n) \geq c > 0$ for all $n\in\N$ so that player $k$ keeps updating at a positive rate.
Then, if $t$ denotes a global counter that runs through the set of update times $\mathcal{T} = \union_{k} \{\tau_{k}(n): n\in\N\}$, the corresponding revision set at time $t\in\mathcal{T}$ will be $R_{t} = \{k: \tau_{k}(n) = t\,\text{for some $n\in\N$}\}$.
In this way, we obtain the following distributed implementation of Algorithm \ref{algo.strategy}, stated for simplicity with logit action selection in mind (viz. $\theta(x) = x\log x$):

\begin{algorithm}[H]
\flushleft
\sf

Parameters:
$\temp>0$, $\theta_{k}$, $\step_{n}$

$n \leftarrow 0$;
\\[2pt]
Initialize $X_{k}\in\relint(\strat_{k})$ as a mixed strategy with full support;
\hfill
\# initialization

\Repeat
{Event \texttt{Play} occurs at time $\tau_{k}(n+1)\in\mathcal{T}$;
\\[2pt]
$n \leftarrow n+1$;
\\[2pt]
select new action $\alpha_{k}$ according to mixed strategy $X_{k}$;
\hfill
\# current action
\\[2pt]
observe $\hat \pay_{k}$;
\hfill
\# receive realized payoff
\\[2pt]
\ForEach
{action $\alpha \in \act_{k}$}
{%
{%
\small
\begin{flalign*}
\txs
X_{k\alpha}
	&\leftarrow X_{k\alpha}
	\notag\\
	&\txs
	+ \step_{n} \left[
	\big(\one({\alpha}_k = \alpha) - X_{k\alpha}\big) \cdot \hat \pay_{k}
	-\temp X_{k\alpha}
	\left(
	\log X_{k\alpha} - \insum_{\beta}^{k} X_{k\beta} \log X_{k\beta}
	\right)
	\right]
\end{flalign*}
}
\hfill
\text{\# update mixed strategy}%
} 
until termination criterion is reached
} 
\normalfont
\caption{Strategy-based learning with asynchronous in-game observations}
\label{algo.distrib}
\end{algorithm}

Following Chapter 7 of \citeor{Borkar08}, we will make the following assumptions for Algorithm \ref{algo.distrib}:
\begin{enumerate}
\addtolength{\itemsep}{2pt}
\item
The step sequence is of the form $\step_{n} = K/n$, where $K$ is a positive constant small enough to guarantee that Algorithm \ref{algo.distrib} remains well-de\-fi\-ned for all $n$ (cf. Lemma \ref{lem.step}).

\item
The strategy revision process $R_{n}$ is a homogeneous ergodic Markov chain over $2^\play$;
in particular, if $\mu$ is its (necessarily unique) stationary distribution, the asymptotic update rate of player $k$ will be $\lambda_{k} = \insum_{A\subseteq\play} \mu(A) \one(k\in A) = \sum_{A\subseteq\play: k\in A} \mu(A)$.

\item
The delay processes $d_{j,k}(n)$ are bounded (a.s.):
this condition ensures that delays become negligible as time steps are aggregated.
\end{enumerate}
These assumptions can actually be weakened further at the expense of simplicity \textendash\ for a more general treatment, see e.g. \citeor{Borkar08}.
Still and all, we have:

\begin{proposition}
\label{prop.robust.delay}
Under the previous assumptions, the conclusions of Theorem~\ref{thm.algo.convergence} still hold for the iterates of Algorithm \ref{algo.distrib} with asynchronous updates and delayed payoffs.
\end{proposition}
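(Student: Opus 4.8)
The plan is to recast the asynchronous, delayed variant of Algorithm \ref{algo.strat_update} as an \emph{asynchronous} stochastic approximation in the sense of \citeor{Borkar08} (Chapter 7) and then check that the resulting mean dynamics inherit the convergence properties of Theorem \ref{thm.algo.convergence}. The pivotal observation is that asynchronicity enters only through the per-player update counts $\phi_{k}(n)$: since $R_{n}$ is ergodic with unique stationary distribution $\mu$, the ergodic theorem gives $\phi_{k}(n)/n \to \ctemp_{k} = \sum_{A\subseteq\play:\,k\in A}\mu(A)$ almost surely, and $\ctemp_{k} > 0$ for every $k$ (no player is starved). By Borkar's framework the algorithm then tracks the mean dynamics rescaled component-wise by these asymptotic rates, which are precisely the choice-adjusted dynamics (\ref{eq.varED}) with $\ctemp_{k}$ acting as the player-specific time change anticipated in the remarks following (\ref{eq.varED}).

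First I would verify the hypotheses $(A_{1})$–$(A_{3})$ in their asynchronous form. Condition $(A_{1})$ holds for $\step_{n}=K/n$; conditions $(A_{2})$ and $(A_{3})$ follow verbatim from Lemma \ref{lem.step}, since the innovation term of the update step remains bounded by the constant $K$ regardless of which subset $R_{n}$ of players updates, exactly as in the proof of Theorem \ref{thm.algo.convergence}. I would then show that (\ref{eq.varED}) retains every property used for convergence: because $\ctemp_{k}>0$, multiplying each player's velocity field by $\ctemp_{k}$ does not move its zeros, so the rest points of (\ref{eq.varED}) still coincide with the restricted \acp{QRE} of $\game$ at rationality level $\rlvl=1/\temp$. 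For a potential game the Lyapunov computation of Lemma \ref{lem.Lyapunov} goes through up to the insertion of the positive factors $\ctemp_{k}$, so $F=\temp h-U$ still strictly decreases along non-stationary orbits; Sard's theorem again makes its critical values a null set, and Lemma \ref{lem.approx_convergence} forces the iterates onto a connected set of restricted \acp{QRE}. The interior-trapping argument also carries over: the limiting ODE (\ref{eq.limit.ODE}) for the relative scores becomes $\dot z_{k\mu}=-\ctemp_{k}\temp z_{k\mu}$, which for $\temp>0$ still has the origin as a global attractor, so $Z(n)$ stays bounded almost surely and every accumulation point of $X=\choice_{0}(Z)$ lies in $\relint(\strat)$.

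It remains to absorb the delays, and this is where I expect the main technical obstacle to lie. With player $k$'s payoff at step $n$ replaced by $\pay_{k}(\alpha_{k}(n);\alpha_{-k}(n-\tau_{k\alpha}(n)))$, the conditional mean of the innovation term picks up the bias $\pay_{k,\alpha_k(n)}(X(n-\tau_{k\alpha}(n))) - \pay_{k,\alpha_k(n)}(X(n))$. The boundedness of the delays, $0\le\tau_{k\alpha}(n)\le M$, together with the slowly varying steps $\step_{n}=K/n$ (for which $\step_{n-\tau}/\step_{n}\to 1$ and $\sum_{m=n-M}^{n}\step_{m}\to 0$), yields $\|X(n-\tau_{k\alpha}(n)) - X(n)\| = \bigoh\big(\sum_{m=n-M}^{n}\step_{m}\big)\to 0$; since each $\pay_{k}$ is Lipschitz on $\strat$, this bias is asymptotically negligible and does not perturb the limiting ODE. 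Any noise in the observed payoffs is simultaneously handled exactly as in Proposition \ref{prop.robust.noise}, the independence of $\xi_{k}$ from $\hat\alpha_{k}$ again leaving the conditional drift intact.

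With these three ingredients in place, I would conclude by invoking the asynchronous-with-delays convergence theorem of \citeor{Borkar08} (Chapter 7), whose hypotheses — bounded delays, slowly varying and summable steps, an ergodic revision process with positive rates, and bounded martingale-difference noise — have all been verified. It delivers almost-sure convergence of $X(n)$ to a connected set of rest points of (\ref{eq.varED}), i.e. to \acp{QRE} of $\game$ with rationality level $\rlvl=1/\temp$, which is precisely the conclusion of Theorem \ref{thm.algo.convergence}. The genuinely delicate point throughout is not the rescaled vector field (whose convergence is immediate from the positivity of the $\ctemp_{k}$) but the careful bookkeeping needed to show that the delay-induced and noise-induced perturbations of the drift vanish in the stochastic-approximation limit, for which the bounded-delay and slowly-varying-step hypotheses are exactly what is required.
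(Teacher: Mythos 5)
Your proposal is correct and follows essentially the same route as the paper's proof: both reduce the asynchronous, delayed variant to Borkar's Chapter 7 asynchronous stochastic-approximation framework, identify the resulting mean dynamics as the rate-adjusted dynamics (\ref{eq.varED}) with constant rates $\ctemp_{k}$ coming from the ergodicity of $R_{n}$, and then observe that the rest points and the Lyapunov function $F = \temp h - U$ are unaffected by the positive factors $\ctemp_{k}$, so the proof of Theorem \ref{thm.algo.convergence} goes through unchanged. The extra details you supply (the vanishing delay bias via bounded delays and slowly varying steps, and the rescaled limiting ODE $\dot z_{k\mu} = -\ctemp_{k}\temp z_{k\mu}$) are precisely the ingredients the paper delegates to Borkar's theorems and to the phrase ``goes through unchanged,'' so they strengthen rather than alter the argument.
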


\begin{proof}{Proof.}
By Theorems 2 and 3 in Chap.~7 of \citeor{Borkar08}, it is easy to see that Algorithm \ref{algo.distrib} represents a stochastic approximation of the rate-adjusted dynamics
\begin{equation}
\label{eq.time_adjusted_dynamics}
\dot x_{k} = \lambda_{k} \text{\ref*{eq.PD}}(x_{k}),
\end{equation}
where $\lambda_{k}$ is the mean rate  at which player $k$ updates her strategy and $\textrm{\ref*{eq.PD}}(x_{k})$ denotes the RHS of the rate-adjusted dynamics \eqref{eq.PD}.
In general, the revision rate $\lambda_k$ is time-dependent (leading to a non-autonomous dynamical system), but given that the revision process $R_{n}$ is a homogeneous ergodic Markov chain, $\lambda_{k}$ will be equal to the (constant) probability of including player $k$ at the revision set $R_{n}$ at the $n$-th iteration of the algorithm.
These dynamics have the same rest points as \eqref{eq.PD} and an easy calculation (cf. the proof of Lemma \ref{lem.Lyapunov}) shows that $F(x) = \temp \insum_{k} h_{k}(x_{k}) - U(x)$ is also Lyapunov for \eqref{eq.time_adjusted_dynamics}.
The proof of Theorem \ref{thm.algo.convergence} then goes through essentially unchanged.
\end{proof}

\subsection{Discussion}
\label{sec.discussion}

We conclude this section by discussing some features of Algorithms \ref{algo.strategy} and \ref{algo.distrib}:

\begin{itemize}

\item
First, both algorithms are highly distributed.
The information needed to update each player's strategies is the payoff of each player's chosen action, so there is no need to be able to assess the performance of alternate strategic choices (including monitoring other players' actions).
Additionally, there is no need for player updates to be synchronized:
as shown in Section~\ref{sec.robustness}, each player can update his strategies independently of others.


\item
The discount rate $\temp = -\log\lambda$ should be positive in order to guarantee convergence.
Smaller values yield convergence to \acp{QRE} that are very close to the game's Nash equilibria;
on the other hand, such a choice also impacts convergence speed because the step sequence has to be taken commensurately small (for instance, note that the step-size bound of Lemma \ref{lem.step} is roughly proportional to the dynamics' discount rate).
As such, tuning the discount rate $\temp$ will usually require some problem-dependent rules of thumb;
regardless, our numerical simulations suggest that Algorithm \ref{algo.strategy} converges within a few iterations even for small discount values (cf. Fig.~\ref{fig.stochastic}).
\end{itemize}


\bibliographystyle{ametsoc} 
\bibliography{IEEEabrv,Bibliography,stochastic} 


\end{document}